\theoremstyle{plain}
\newtheorem{Theorem}{Theorem}
\newtheorem{Remark}{Remark}
\newtheorem{Lemma}{Lemma}
\renewcommand{\sec}{{\rm sech}}
\title{{\bf\Large Stability of equilibrium solutions of a double power reaction-diffusion equation with a Dirac interaction}}
\author{{\bf\large C\'esar A. Hern\'andez M.}\footnote{Email: cahmelo@uem.br, Tel. (55)-44-3011-5358, Fax: (55)-11-3011-3873 }\hspace{2mm}
	{\bf\large}\vspace{1mm}\\
	{\it\small Department of Mathematics,  DMA-UEM}\\
	{\it\small Av. Colombo, 5790 Jd. Universit\'ario,,}\\
	{\it\small  CEP 87020-900, Maring\'a, PR, Brazil}
	\vspace{3mm}\\
	{\bf\large Edgar Mayorga L.}\footnote{Email:  edgar.mayorga@unisabana.edu.co, Tel. (571)-861 5555-25041}\hspace{2mm}
	{\bf\large}\vspace{1mm}\\
	{\it\small University of Sabana}\\
	{\it\small Department of Mathematics, Physics and Statistics}\\
	{\it\small Ch\'ia, Colombia}\vspace{3mm}}
\date{July 24, 2017}
\begin{document}
	\maketitle
	\begin{abstract}
		In this paper we provide detailed information about the instability of equilibrium solutions of a nonlinear family of localized reaction-difussion equations in dimensione one. Beyond we provide explicit formulas to the equilibrium solutions, via perturbation method and we calculate the exact number of positive eigenvalues of the linear operator associated to the stability problem, which allow us to compute the dimension of the unstable manifold. 
	\end{abstract}
	\textbf{Mathematics  Subject  Classification (2010)}. Primary
	35K05, 35B10; 35B35; 35B38.\\
	\textbf{Key  words}. Reaction-difussion equation, Dirac interaction, Stability of equilibrium solutions, Blow-up of solutions, Analytic perturbation.

\section{Introduction}\label{int}

In this paper, we study the stability/instability of equilibrium solutions associated to the following generalized Huxley equation with a point defect interaction (henceforth GDH), 
\begin{equation}\label{deltalog}
u_{t}-u_{xx}=Z\delta(x)u+wu+au^{p}+bu^{2p-1},\hspace{0.7cm}\text{for }t,x\in\mathbb{R},
\end{equation}
where $u=u(x,t)$, $\delta:H^1(\mathbb{R})\rightarrow \mathbb{R}$ defined by $\delta(g)=g(0)$ the Dirac distribution localized at zero, and $a,b,p,Z$ are real parameters with $p>1$.\\

The GDH equation has many applications. For instance, when $Z=0$, $w=-\gamma_1\beta_1$, $a=\beta_1(1+\gamma_1)$ and $b=-\beta_1$, with $\beta_1>0$ and $\gamma_1\in(0,1)$, the GDH equation is reduced to the genearalized Huxley equation, namely
\begin{equation}\label{Huxley}
u_{t}-u_{xx}=\beta_1u(1-u^{p-1})(u^{p-1}-\gamma_1),
\end{equation}
which describes nerve pulse propagation in nerve fibres \cite{Aronson}, wall motion in liquid crystals \cite{Wang}, genetic population \cite{Lou} and  combustion \cite{Gilding}.
In chemistry, when $Z\neq 0$, the GDH equation can be considered as an specific model describing the concentration $u$ of a substance distributed one dimension space under the influence of local reaction site, bulk reaction and trasport, 
see equation (II.1) in \cite{Bimpong} and the references therein for details.\\ 

From the mathematical point of view, exact travelling solitary wave solutions, exact equilibrium solutions, and numerical solutions of the equation (\ref{Huxley}) have been discussed in the last years, see (\cite{Liu}, \cite{Wang}, \cite{Yefimova}, \cite{Deng}, \cite{Gao}, \cite{NewWang}, \cite{Batiha}, \cite{Talaat}, \cite{Arora}). The stability/instability of travelling wave solutions and equilibrium solutions of more general equations than the equation (\ref{Huxley}) have been discussed widely in \S 5.4 of \cite{DAN HENRY}, and more recently in \cite{Ghazaryan}. The problem of blow-up of solutions of semilinear parabolic equations have been also discussed in the las decades, for a survey on this subject, we refer the reader to \cite{Vazquez}.\\

However, the existence and stability of equilibrium solutions as well as the blow up of  solutions of the GDH equation, when $Z\neq 0$, have not been studied yet.\\

\textbf{A-} \textit{Equilibrium solutions of the GDH equation.} \\\\
By an equilibrium solution of the equation (\ref{deltalog}), we mean a function $\phi$ in the domain of the operator $\partial_{xx}+Z\delta(x)$, that is to say
\begin{equation*}\label{Dain}
\phi\in{\cal D}(\partial_{xx}+Z\delta(x))=\left\{g\in H^1(\mathbb{R})\cap H^2(\mathbb{R}\setminus\left\{0\right\})|g'(0+)-g'(0-)=-Zg(0)\right\}
\end{equation*}
satisfying the differential equation
\begin{equation}\label{ltaordenada}
(\partial_{xx}+Z\delta(x))\phi+w\phi+a\phi^{p}+b\phi^{2p-1}=0.
\end{equation}
Now, if we set $\alpha=\frac{a}{p+1}$, $\beta=\frac{b}{p}$, and
\begin{equation}\label{sol}
\phi_{w,Z}(x)=\left[\frac{\alpha}{-w}+\frac{\sqrt{\alpha^2-\beta w}}{-w}\text{ cosh}\left((p-1)\sqrt{-w}\left(|x|+R^{-1}\left(\frac{Z}{2\sqrt{-w}}\right)\right)\right)\right]^{\frac{-1}{p-1}},
\end{equation}
with $R:(-\infty,\infty)\rightarrow (-1,1)$ being the diffeomorphism defined by
\begin{equation}\label{dif}
R(s)=\frac{\sqrt{\alpha^2-\beta w}\text{ senh}((p-1)\sqrt{-w}s)}{\alpha+\sqrt{\alpha^2-\beta w}\text{ cosh}((p-1)\sqrt{-w}s)}, 
\end{equation}
then, in the section \ref{equilibrios} below, we will show that
\begin{enumerate}
\item\label{um1} If $a>0$, and $b>0$, then the family of functions (\ref{sol}) are equilibrium solutions of the GDH equation, providing 
$$
\frac{Z^2}{4}<-w.
$$ 
\item\label{dois2} If $a>0$, and $b<0$, then the family of functions $\phi_{w,Z}$ are equilibrium solutions of the GDH equation, providing 
$$
\frac{Z^2}{4}<-w<-\frac{pa^2}{(p+1)^2b}.
$$ 
\end{enumerate} 
\begin{Remark} When $Z=0$, $p=2$, $w=-\gamma_1$, $a=1+\gamma_1$, $b=-1$, the solution $\phi_{w,Z}$ is the equilibrium solution of the Nagumo equation, see formula $(7)$ in \textnormal{\cite{Nagumo}}. 
When $Z=0$, $p=3$, $a=1$, $b=1$, $\phi_{w,Z}$ is the equilibrium solution appearing in the formula $(32)$ of \textnormal{\cite{Hernandez}}.     
\end{Remark}

\textbf{B-} \textit{Stability/Instability of the equilibrium solutions of GDH equation.}\\\\
The equilibrium solution $\phi_{w,Z}$ given in (\ref{sol}), (\ref{dif}) is \emph{stable} in $H^1(\mathbb{R})$ by the flow of the GDH equation (\ref{deltalog}), if for every $\epsilon>0$ there exists $\delta>0$ such that, 
$$
\text{if}\hspace{0.3cm}g\in H^1(\mathbb{R})\hspace{0.3cm}\text{with}\hspace{0.3cm}||g-\phi_{w,Z}||_1<\delta,\hspace{0.3cm}\text{then}\hspace{0.3cm}||u(t)-\phi_{w,Z}||_1<\epsilon, 
$$
for all $t>0$, here $u(t)$ denotes the solution of the equation (\ref{deltalog}) generated by the initial data $u(0)=g$. Otherwise, $\phi_{w,Z}$ is \emph{unstable}.\\

We are now in position to establish the principal result of this note.

\begin{Theorem}\label{MainResult} Let $a,b,p,Z,w$ parameters satisfying the conditions \ref{um1}-\ref{dois2} above, then the equilibrium solutions $\phi_{w,Z}$ are unstable. In addition,
\begin{enumerate}
\item For $Z<0$, the unstable manifold associated to $\phi_{w,Z}$ has dimension $2$.
\item For $Z> 0$, the unstable manifold associated to $\phi_{w,Z}$ has dimension $1$.
\end{enumerate}
\end{Theorem}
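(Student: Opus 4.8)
The plan is to linearize the GDH equation around the equilibrium solution $\phi_{w,Z}$ and study the spectrum of the resulting self-adjoint operator. Writing $u = \phi_{w,Z} + v$ and keeping the linear terms in $v$, the evolution is governed by an operator of the form
\[
\mathcal{L} = -\partial_{xx} - Z\delta(x) - w - pa\,\phi_{w,Z}^{p-1} - (2p-1)b\,\phi_{w,Z}^{2p-2},
\]
acting on $L^2(\mathbb{R})$ with the $\delta$-interaction encoded in the domain via the jump condition $v'(0+)-v'(0-)=-Zv(0)$. By the standard linearization theory for parabolic equations developed in Chapter 5 of \cite{DAN HENRY}, the dimension of the unstable manifold of $\phi_{w,Z}$ equals the number of \emph{negative} eigenvalues of $\mathcal{L}$ (equivalently, the number of positive eigenvalues of $-\mathcal{L}$, which generates the linear flow). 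So the entire theorem reduces to counting the negative eigenvalues of $\mathcal{L}$ in the two regimes $Z<0$ and $Z>0$, and in particular showing there is always at least one, which yields instability.

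\emph{The key computational step} is to identify $\phi_{w,Z}'$ as the natural candidate for an eigenfunction. Differentiating the equilibrium equation (\ref{ltaordenada}) formally in $x$ shows that $\psi := \phi_{w,Z}'$ satisfies $\mathcal{L}\psi = 0$ on $\mathbb{R}\setminus\{0\}$; this is the usual translational-mode observation. However, because $\phi_{w,Z}$ is built from $|x|$ it has a corner at the origin, so $\psi$ is \emph{not} in the domain of $\mathcal{L}$ — it fails the jump condition and is in fact discontinuous at $0$. The plan is to exploit exactly this failure. First I would establish the symmetry $\phi_{w,Z}(-x)=\phi_{w,Z}(x)$, so $\mathcal{L}$ commutes with the parity reflection and its eigenspaces split into even and odd sectors. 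On the odd sector the $\delta$-potential is invisible (odd functions vanish at $0$), so $\mathcal{L}$ restricts to the free Schr\"odinger-type operator $-\partial_{xx}+V$ with $V=-w-pa\phi^{p-1}-(2p-1)b\phi^{2p-2}$ on the half-line with a Dirichlet condition at $0$; the odd extension of $\psi|_{(0,\infty)}$ is then a genuine, nodeless-on-$(0,\infty)$ eigenfunction for an associated half-line problem, pinning down a reference eigenvalue. On the even sector the jump condition becomes active and the sign of $Z$ enters.

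\emph{The heart of the count} is a Sturm–Liouville / oscillation argument. Since $\phi_{w,Z}>0$ decays at infinity and $\psi=\phi_{w,Z}'$ has exactly one sign change (at the origin, where $\phi$ peaks), the nodal structure of the zero-energy solution controls how many eigenvalues lie below $0$. Concretely, I would count negative eigenvalues of $\mathcal{L}$ by analyzing, in each parity sector, the solution that decays at $+\infty$ and tracking its value and derivative at $x=0$ as functions of the spectral parameter $\lambda$, then imposing the $\lambda$-dependent matching condition dictated by the $\delta$-interaction. The sign of $Z$ changes this matching condition (attractive vs. repulsive $\delta$), and this is what produces the dichotomy: for $Z<0$ the even ground state drops below the odd reference eigenvalue, giving \emph{two} negative eigenvalues (unstable dimension $2$), while for $Z>0$ only \emph{one} negative eigenvalue survives (unstable dimension $1$). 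The perturbation formulas for $\phi_{w,Z}$ from Section \ref{equilibrios} would supply the explicit potential $V$ needed to make these comparisons quantitative.

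\emph{The main obstacle} I anticipate is the rigorous handling of the $\delta$-interaction in the eigenvalue count: one must carefully set up the self-adjoint extension, verify that $\mathcal{L}$ is bounded below with essential spectrum $[-w,\infty)$ (so that $-w>0$ separates the finite discrete negative spectrum from the continuum), and convert the formal translational mode $\phi_{w,Z}'$ into precise matching data at the origin. The counting itself — showing the negative-eigenvalue total is exactly $2$ or $1$ and not more — will require either an explicit solvability of the $\lambda$-shifted ODE (plausible given the closed-form $\phi_{w,Z}$) or a comparison/continuation argument in $Z$ starting from the clean $Z=0$ case, where parity symmetry makes the count transparent, and then tracking how crossing $Z=0$ moves the even eigenvalue across zero.
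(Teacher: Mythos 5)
Your reduction of the theorem to counting positive eigenvalues of the linearized operator via Henry's Theorems 5.1.3 and 5.2.1 is exactly the paper's starting point, and the parity splitting is legitimate since $\phi_{w,Z}$ is even. But the mechanism you propose for the $Z<0$/$Z>0$ dichotomy is wrong, and it matters. The eigenvalue that crosses zero at $Z=0$ is the continuation of the \emph{odd} zero mode $\phi'_{w,0}$, and on the odd sector the $\delta$-interaction is, as you yourself observe, invisible: the jump condition reduces to a Dirichlet condition at the origin, independent of $Z$. Hence the sign of $Z$ cannot enter through the even-sector matching condition, as you claim (``for $Z<0$ the even ground state drops below the odd reference eigenvalue''); the even ground state persists as a negative eigenvalue for all admissible $Z$ (for $Z<0$ the $\delta$ is repulsive and pushes the even sector up, not down), and the second negative eigenvalue for $Z<0$ lives in the odd sector. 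What actually drives the crossing is the $Z$-dependence of the potential through the deformation of the profile $\phi_{w,Z}$: this is precisely what the paper computes in Lemma \ref{lem6.8}, where $\Pi_2'(0)=\beta$ is expressed through $\chi_0=\left.\frac{d}{dZ}\phi_{w,Z}\right|_{Z=0}$ and shown to equal $\phi_{w,0}(0)\left(-\phi''_{w,0}(0)\right)/\|\phi'_{w,0}\|^2>0$. Relatedly, two of your auxiliary claims fail: for $Z<0$, by (\ref{numeratorwithZ}) the profile has a local minimum at the origin and maxima at $\pm x_Z$ with $x_Z=-R^{-1}\bigl(Z/(2\sqrt{-w})\bigr)>0$, so $\psi=\phi'_{w,Z}$ has an interior zero on $(0,\infty)$ and is not nodeless there; and the odd extension of $\psi|_{(0,\infty)}$ is never an eigenfunction of the Dirichlet half-line problem when $Z\neq 0$, since $\psi(0+)=-\tfrac{Z}{2}\phi_{w,Z}(0)\neq 0$ --- it is only a zero-energy shooting solution.

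Beyond these errors, the decisive step --- that the number of negative eigenvalues is \emph{exactly} two for all admissible $Z<0$ and exactly one for all $Z>0$, not merely for $Z$ small --- is left as a choice between an unexecuted explicit ODE analysis and a continuation argument. The paper carries out the continuation route: it proves that $\{-\mathcal{L}_Z\}$ is a real-analytic family of type (B) (Lemma \ref{lem6.5}), applies the Kato--Rellich theorem to track the second eigenvalue near $Z=0$ (Lemma \ref{piomega}), computes the sign of its derivative as above (Lemma \ref{lem6.8}), and --- crucially --- proves that the kernel of $\mathcal{L}_Z$ is trivial for every $Z\neq 0$ (Lemma \ref{Kert}), which forbids any eigenvalue from crossing zero away from $Z=0$ and lets the local count propagate to the whole admissible range (Lemma \ref{Numberofnegeig}). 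Your sketch contains no substitute for the kernel-triviality lemma; without it, neither your oscillation count nor your continuation alternative can exclude additional crossings at finite $Z$, so as written the argument does not establish the exact unstable dimensions $2$ and $1$.
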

The proof of the Theorem \ref{MainResult} can be obtained in the classical way, by analysing the spectrum of the linear self-adjoint operator 
\begin{equation}\label{L11,L223delta}
\left\{
\begin{aligned}
&{\cal L}_{Z}:{\cal D}(\partial_{xx}+Z\delta(x))\rightarrow L^2(\mathbb{R})\\
&{\cal L}_{Z}g=(\partial_{xx}+Z\delta(x))g+wg+ap\phi_{w,Z}^{p-1}g+b(2p-1)\phi_{w,Z}^{2p-2}g, 
\end{aligned}
\right.
\end{equation}
which is the linear approximation of the function
$$
G(u)=u_{xx}+Z\delta(x)u+wu+au^p+bu^{2p-1},
$$
at $\phi_{w,Z}$, i.e., $G'(\phi_{w,Z})g={\cal L}_{Z}g$, see Theorem 5.1.1, 5.1.3, and 5.2.1 in \cite{DAN HENRY}.
 In general, to count the number of positive eigenvalues of a linear operator is a delicate issue. In the case of the self-adjoint operator $\mathcal L_{Z}$ in (\ref{L11,L223delta}) our strategy is based in two basic facts. First, if one is $Z=0$, the spectrum of the self-adjoint operator $\mathcal L_0$ defined by 
\begin{equation*}\label{specA8}
\mathcal L_{0}=\frac{d^2}{dx^2}+w+a p\phi^{p-1}_{w, 0} +b(2p-1)\phi^{2p-2}_{w, 0}
\end{equation*}
with domain $H^2(\mathbb R)$ is well-known: there is only one positive eigenvalue which is simple, zero is a simple eigenvalue  with eigenfunction $\frac{d}{dx}\phi_{\omega,0}$. The rest of the spectrum is negative and away from zero. Second, if $Z$ is small, $\mathcal L_{ Z}$ can be considered as a {\it real-holomorphic perturbation} of $\mathcal L_{0}$. So, we have that the spectrum of $\mathcal L_{Z}$ depends holomorphically on the spectrum of  $\mathcal L_{0}$. Then we obtain that for $Z<0$ there are exactly two positive  eigenvalues of $\mathcal L_{Z}$ and exactly one for $Z>0$. We refer the reader to Section \ref{spectral} for the precise details on these statements.
 
\begin{Remark} 
When $Z=0$, the proof of the instability of the equilibrium solution $\phi_{w,0}$ can be obtained as a consequence of the  
Theorem \textnormal{5.1.3} in \textnormal{\cite{DAN HENRY}} and from the spectral properties of the operator ${\cal L}_0$ presented for instance in Theorem \textnormal{B.61} in \textnormal{\cite{Angulopavabook}}. 
\end{Remark}

This paper is organized as follows. In section \ref{boacolh}, we establish a local and global well-posedness theory for the GDH equation, in addition we establish the existence of solutions that blow up in finite time. Section \ref{equilibrios} describes the construction of the profile $\phi_{\omega, Z}$ in (\ref{sol}) for $w,Z,a, b$ satisfying the conditions in the Theorem \ref{solution10} below. Section \ref{spectral} describes the spectral theory for the operators $\mathcal L_{Z}$ in (\ref{L11,L223delta}).


\section[Local and global well posedness]{Local and global well-posedness for the GDH equation}\label{boacolh}

In this section we discuss some results about the local and global well-posedness problem associated to the GDH equation in $H^1(\mathbb{R})$
\begin{equation}\label{cachy12}
\left\{
\begin{array}{lll}
\displaystyle u_{t}+(A_Z-w)u=au^p+bu^{2p-1},\\
u(0)=g\in H^1(\mathbb{R}),
\end{array}
\right.
\end{equation}
where
\begin{equation}\label{FormalExpression}
A_Z:=-\frac{d^2}{dx^2}-Z\delta(x).
\end{equation}
Since our approach will be based in the abstract results in \S 3 of \cite{DAN HENRY}, we will establish the necessary framework. Initially, we recall that the formal expression in (\ref{FormalExpression}) can be understood as the family of self-adjoint operators with domain
\begin{equation*}\label{dain23}
D(A_Z)=\left\{g\in H^1(\mathbb{R})\cap H^2(\mathbb{R}-\left\{0\right\})|g'(0+)-g'(0-)=-Zg(0) \right\},
\end{equation*}
which represent all the self-adjoint extensions associated to  the following closed, symmetric, densely defined linear operator (see \cite{Albeverio}): 
\begin{equation*}
\left\{
\begin{aligned}
A_0&=-\frac{d^2}{dx^2}\\
D(A_0)&=\{g\in H^2(\mathbb{R}): g(0)=0 \}.
\end{aligned}
\right.
\end{equation*}
Moreover, for $Z\in \mathbb R$ we have that the essential spectrum of $A_Z $ is the nonnegative real axis, $\Sigma_{ess}(A_Z)=[0,+\infty)$. For $Z>0$, $A_Z $ has exactly one negative, simple eigenvalue, i.e., its discrete spectrum $\Sigma_{dis}(A_Z )$ is $\Sigma_{dis}(A_Z )=\{{-Z^2/4}\}$, with a strictly (normalized) eigenfunction $
 \Psi_Z(x)=\sqrt{\frac{Z}{2}}e^{-\frac{Z}{2}|x|}$. For $Z\leqq 0$,  $A_Z $ has not discrete spectrum, $\Sigma_{dis}(A_Z )=\emptyset$. Therefore the 
operators $A_Z$ are bounded from below,
\begin{equation}\label{boundbelo}
\left\{
\begin{aligned}
&A_Z\geq-Z^2/4, \hspace{0.5cm}& Z>0 \ \\ 
&A_Z\geq 0 & Z\leq0 .
\end{aligned}
\right.
\end{equation}

\begin{Theorem}\label{cazi} For any $u_0\in H^1(\mathbb{R})$, there exists $T>0$ and a unique solution $u$ of  $(\ref{cachy12})$ such that $u\in C^1([0,T); H^{1} (\mathbb{R}))$ and  $u(0)=u_0$.  For each $T_0\in (0,T)$ the mapping
$$
u_0\in H^1(\mathbb{R}) \to u\in C([0,T); H^1(\mathbb{R}))
$$
is continuous. If an initial data $u_0$ is even the solution $u(t)$ is also even.
\end{Theorem}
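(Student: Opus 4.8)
The plan is to recast the Cauchy problem (\ref{cachy12}) into the abstract semilinear parabolic framework of \S 3 of \cite{DAN HENRY} and then read off local existence, uniqueness, regularity and continuous dependence from the abstract theorems there. The first step is to verify that $A_Z-w$ is a sectorial operator on the base space $X:=L^2(\mathbb{R})$. Since (\ref{boundbelo}) shows that $A_Z$ is self-adjoint and bounded below, the shifted operator $A_Z-w+\lambda$ is, for $\lambda$ large enough, a positive self-adjoint operator; every such operator is sectorial and generates an analytic semigroup, which lets me define the fractional power spaces $X^{\alpha}:=D((A_Z-w+\lambda)^{\alpha})$ on the scale determined by $A_Z$.

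The second, and crucial, step is to identify the working space as $X^{1/2}=H^1(\mathbb{R})$, with equivalent norms and \emph{independently of} $Z$. I would do this through the quadratic form associated to $A_Z$, namely $q(g)=\|g'\|_{L^2}^2-Z\,|g(0)|^2$ with form domain $H^1(\mathbb{R})$: the boundary term is controlled by the one-dimensional trace inequality $|g(0)|^2\le\epsilon\|g'\|_{L^2}^2+C_\epsilon\|g\|_{L^2}^2$, so the Dirac term is form-bounded relative to $-d^2/dx^2$ with relative bound zero and does not change the form domain. Since for a nonnegative self-adjoint operator the domain of its square root coincides with its form domain, this gives $X^{1/2}=H^1(\mathbb{R})$ regardless of the value of $Z$.

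With the scale fixed, I would then check the only analytic input genuinely specific to this equation: that the Nemytskii map $f(u)=au^{p}+bu^{2p-1}$ (read as $|u|^{p-1}u$ and $|u|^{2p-2}u$ when $p$ is non-integer) is locally Lipschitz from $X^{1/2}=H^1(\mathbb{R})$ into $X=L^2(\mathbb{R})$. Here the one-dimensional Sobolev embedding $H^1(\mathbb{R})\hookrightarrow L^\infty(\mathbb{R})$ does the work: for $u,v$ in a ball of $H^1$ the uniform pointwise bound turns the power nonlinearities into Lipschitz maps, since $\|u^{p}-v^{p}\|_{L^2}\le p\max(\|u\|_\infty,\|v\|_\infty)^{p-1}\|u-v\|_{L^2}\le C(\rho)\|u-v\|_{H^1}$, and likewise for the $2p-1$ term. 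Feeding sectoriality, the identity $X^{1/2}=H^1$, and this Lipschitz estimate into Theorems 3.3.3 and 3.3.4 of \cite{DAN HENRY} yields a unique local solution with the regularity and the continuous dependence on $u_0$ in $C([0,T_0);H^1)$ asserted in the statement, the smoothing of the analytic semigroup being what upgrades the mild solution to the claimed time regularity.

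Finally, the invariance of evenness follows from uniqueness together with the symmetry of the problem. Writing $(\sigma u)(x):=u(-x)$, both $-d^2/dx^2$ and the Dirac term $Z\delta(x)$ commute with $\sigma$, so $A_Z$ and hence the semigroup $e^{-(A_Z-w)t}$ commute with $\sigma$; the power nonlinearity is applied pointwise and therefore also commutes with $\sigma$. Consequently $\sigma u(t)$ solves (\ref{cachy12}) with initial datum $\sigma u_0$, and if $u_0$ is even, i.e. $\sigma u_0=u_0$, uniqueness forces $\sigma u(t)=u(t)$ for all $t$. The step I expect to demand the most care is the identification $X^{1/2}=H^1(\mathbb{R})$ for the singular operator $A_Z$, since this is precisely where the Dirac interaction could in principle distort the interpolation scale; once that identification is secured, everything else is a routine application of the Henry machinery.
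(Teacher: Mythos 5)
Your proposal follows essentially the same route as the paper: both invoke Henry's abstract framework (Theorem 3.3.3 in \cite{DAN HENRY}) with base space $X=L^2(\mathbb{R})$, use that $A_Z$ is self-adjoint and bounded below (via (\ref{boundbelo})) to get a positive sectorial operator after a suitable shift, identify $X^{1/2}=H^1(\mathbb{R})$ through the form norm $\|u_x\|_{L^2}^2+(-w+a_1)\|u\|_{L^2}^2-Z|u(0)|^2$, and rely on the local Lipschitz property of $u\mapsto au^{p}+bu^{2p-1}$ from $H^1(\mathbb{R})$ into $L^2(\mathbb{R})$. Your write-up is correct and simply supplies details the paper leaves implicit, notably the trace-inequality justification that the Dirac term has relative form bound zero (so the form domain, hence $X^{1/2}$, is $H^1(\mathbb{R})$ for every $Z$) and the reflection-plus-uniqueness argument for the propagation of evenness, which the paper asserts without proof.
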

\begin{proof}
The proof of this theorem is an application of Theorem 3.3.3 in \cite{DAN HENRY}. First of all, from (\ref{boundbelo}), we have the self-adjoint operator ${\cal A}\equiv A_{Z}-w+a_1$ on the space $X=L^2(\mathbb{R})$, with $a_1 >Z^2/4+w$ for $Z>0$ and  $a_1>w$ for $Z\leqq 0$, and domain $D({\cal A})=D(A_{Z})$, satisfies $\Sigma({\cal A})> 0$. Secondly, in our case it is possible to consider the space $X^{1/2}=H^1(\mathbb{R})$ with norm 
\[
||u||^2_{1/2}=||u_x||_{L^2}^2+(-w+a_1)||u||_{L^2}^2-Z|u(0)|^2,
\] 
which is equivalent to the usual norm in $H^1(\mathbb{R})$. Lastly, it is well known that the function $u\in H^1(\mathbb{R})\rightarrow f(u)=au^{p}+bu^{2p-1}$ is locally Lipschitzian.     
\end{proof}

Now, for the case $Z=0$ in (\ref{cachy12}) is well-known that the double-power nonlinearity induce restrictions on the existence of global solutions. The following theorem shows that a similar picture happens for $Z\neq 0$.

\begin{Theorem}\label{gwpdel} i) For $Z>0$, and $\frac{Z^2}{4}<-w$. The solution of the Cauchy problem $(\ref{cachy12})$ is globally well defined in $H^1(\mathbb{R})$ providing the norm of the initial data $u(0)=g$ small in $H^{1}(\mathbb{R})$.\\
ii) For $Z\leq 0$, and $0<-w$. The solution of the Cauchy problem $(\ref{cachy12})$ is globally well defined in $H^1(\mathbb{R})$ providing the norm of the initial data $u(0)=g$ small in $H^{1}(\mathbb{R})$.
\end{Theorem}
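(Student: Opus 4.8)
The plan is to upgrade the local solution furnished by Theorem \ref{cazi} to a global one by deriving an a priori bound on its $H^1$ norm that holds whenever $\|g\|_1$ is small. Global existence then follows from the blow-up alternative attached to the analytic semigroup generated by $A_Z-w$ (Theorem 3.3.4 in \cite{DAN HENRY}): the maximal time $T_{\max}$ is either $+\infty$ or else $\limsup_{t\to T_{\max}}\|u(t)\|_1=+\infty$. Thus it suffices to show that, for $\|g\|_1$ sufficiently small, the map $t\mapsto\|u(t)\|_1$ remains bounded (in fact decays) on the whole maximal interval.

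The decisive structural fact is that the hypotheses on $w$ and $Z$ are exactly those making the self-adjoint operator $A_Z-w$ strictly positive. From the spectral lower bounds (\ref{boundbelo}), for $Z>0$ one has $A_Z-w\geq -Z^2/4-w=:c_0>0$ precisely under $Z^2/4<-w$, while for $Z\leq0$ one has $A_Z-w\geq -w=:c_0>0$ precisely under $-w>0$. Consequently the quadratic form
\[
\|u\|_*^2:=\langle(A_Z-w)u,u\rangle=\|u_x\|_{L^2}^2-Z|u(0)|^2-w\|u\|_{L^2}^2
\]
is positive definite, and, using the one-dimensional embedding $H^1(\mathbb{R})\hookrightarrow L^\infty(\mathbb{R})$ to absorb the boundary term $|u(0)|^2$, it is equivalent to the usual norm: $c_1\|u\|_1^2\leq\|u\|_*^2\leq c_2\|u\|_1^2$ for some $c_1,c_2>0$.

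Next I would produce a differential inequality for $y(t):=\|u(t)\|_*^2$. Pairing the equation in (\ref{cachy12}) with $(A_Z-w)u$ in $L^2(\mathbb{R})$ — legitimate since parabolic smoothing puts $u(t)\in D(A_Z)$ for $t>0$ — and using self-adjointness yields
\[
\tfrac12\frac{d}{dt}\|u\|_*^2+\|(A_Z-w)u\|_{L^2}^2=\langle au^p+bu^{2p-1},(A_Z-w)u\rangle.
\]
The operator bound $A_Z-w\geq c_0$ gives $\|(A_Z-w)u\|_{L^2}^2\geq c_0\|u\|_*^2$, and after a Cauchy–Schwarz/Young step the remaining task is to estimate $\|au^p+bu^{2p-1}\|_{L^2}$. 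Since $H^1(\mathbb{R})\hookrightarrow L^q(\mathbb{R})$ for every $q$, one obtains $\|au^p+bu^{2p-1}\|_{L^2}\leq C(\|u\|_1^p+\|u\|_1^{2p-1})$, and hence, via the norm equivalence,
\[
\frac{dy}{dt}+c_0\,y\leq C'\big(y^p+y^{2p-1}\big),
\]
where the right-hand side is of strictly higher order in $y$ because $p>1$.

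Finally I would run a standard continuation (bootstrap) argument: choosing $\|g\|_1$, hence $y_0:=y(0)$, small enough that $C'\big((2y_0)^{p-1}+(2y_0)^{2p-2}\big)\leq c_0/2$, the inequality forces $\dot y\leq-\tfrac{c_0}{2}y\leq0$ as long as $y(t)\leq 2y_0$; an open–closed argument then gives $y(t)\leq y_0$ throughout the maximal interval, so $\|u(t)\|_1$ stays bounded and $T_{\max}=+\infty$. Preservation of evenness for even data is inherited from Theorem \ref{cazi}, since $A_Z$ and the nonlinearity commute with the reflection $x\mapsto-x$. The main obstacle is the coercivity and equivalent-norm step of the second paragraph in the case $Z>0$, where the boundary term $-Z|u(0)|^2$ is negative and must be controlled; this is exactly where $Z^2/4<-w$ enters, through the spectral bound (\ref{boundbelo}). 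Once that positivity is secured, the rest is a routine superlinear small-data estimate.
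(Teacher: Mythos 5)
Your argument is correct, but it takes a genuinely different route from the paper's. The paper gives no energy estimate at all: it reads off the spectrum of the linearization at the zero equilibrium from (\ref{boundbelo}) --- $\Sigma(-A_Z+w)=(-\infty,w]\cup\{w+\frac{Z^2}{4}\}$ for $Z>0$ and $\Sigma(-A_Z+w)=(-\infty,w]$ for $Z\leq 0$ --- notes that under the stated hypotheses this spectrum is negative and bounded away from zero, and then invokes Theorem 5.1.1 of \cite{DAN HENRY} (stability by linearization, in the framework $X^{1/2}=H^1(\mathbb{R})$ already set up in the proof of Theorem \ref{cazi}) to conclude that $u\equiv 0$ is an asymptotically stable equilibrium of (\ref{cachy12}); small-data global existence is then immediate. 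You prove the same spectral positivity in quadratic-form language ($A_Z-w\geq c_0>0$, which is exactly the paper's spectral observation) and then rebuild the stability by hand: pairing with $(A_Z-w)u$, the superlinear differential inequality $\dot y+c_0y\leq C'(y^p+y^{2p-1})$, an open--closed bootstrap, and the blow-up alternative. What your route buys is self-containedness --- no need to verify the hypotheses of Henry's abstract theorem, and you get explicit exponential decay of $\|u(t)\|_1$ --- at the cost of the regularity justifications (parabolic smoothing into $D(A_Z)$ to legitimize the pairing, differentiability of $t\mapsto\|u(t)\|_*^2$), which you correctly flag but would have to carry out. Two small remarks: the norm equivalence you derive in your second paragraph is literally the paper's own claim in the proof of Theorem \ref{cazi} with $a_1=0$ (admissible there precisely because $Z^2/4+w<0$), so you could simply cite it; and for $Z>0$ the absorption of the negative term $-Z|u(0)|^2$ requires the sharp trace bound $|u(0)|^2\leq\|u\|_{L^2}\|u_x\|_{L^2}\leq\frac{\mu}{2}\|u\|_{L^2}^2+\frac{1}{2\mu}\|u_x\|_{L^2}^2$, where a workable $\mu$ exists exactly when $Z^2/4<-w$ --- a cruder one-sided estimate would appear to demand the stronger condition $Z^2<-w$, so the constant-tracking here is not entirely cosmetic.
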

\begin{proof} This result is a consequence of the stability of the equilibrium solution $u\equiv 0$ in $H^{1}(\mathbb{R})$ that can be obtained via spectral analysis of the 
linear operator $A\equiv-A_Z+w$, and via Theorem 5.1.1 in \cite{DAN HENRY}. Indeed, from (\ref{boundbelo}), we deduce that  
\begin{enumerate}[i)]
\item For $Z>0$, $\Sigma(-A_Z+w)=(-\infty,w]\cup\{w+\frac{Z^2}{4}\}$. 
\item For $Z\leq 0$, $\Sigma(-A_Z+w)=(-\infty,w]$. 
\end{enumerate}
Then the spectrum of the operator $A$ is negative and away from zero. The rest of the hypothesis of the Theorem 5.1.1 were discussed above.
\end{proof}
Now, let $u$ be the solution of the Cauchy problem (\ref{cachy12}), let $Z>0$, and consider $t\to R(t)\in\mathbb{R}$ the function defined by the expression     
\begin{equation}\label{erre}
R(t)=\int_{-\infty}^{\infty} u(x,t)e^{\frac{-Z|x|}{2}} dx.
\end{equation}
In addition, consider 
$$
\lambda=w+Z^2/4<0,\hspace{0.4cm}\beta=a(Z/4)^{\frac{p-1}{p}},\hspace{0.4cm}\gamma=b(Z/4)^{\frac{2(p-1)}{2p-1}},\hspace{0.4cm}\text{and}\hspace{0.4cm}   
z_1=\frac{-\beta\lambda^{-1}-\sqrt{(\beta\lambda^{-1})^2-4\gamma\lambda^{-1}}}{2}.
$$
Then, the result below establishes the blow up of solutions of the Cauchy problem $(\ref{cachy12})$ for specific values of the parameters.

\begin{Theorem}\label{blow} For $Z>0$, $a,b>0$ and $\frac{Z^2}{4}<-w$. The solution of the Cauchy problem \textnormal{(\ref{cachy12})} with initial positive data $u(0)=g$ blows up in finite time providing 
$$
\int_{-\infty}^{\infty}g(x)e^{-\frac{Z}{2}|x|}dx>R_1,\hspace{1.0cm}\text{with}\hspace{1.0cm}R_1=\left[\frac{z_1\lambda}{\gamma}\right]^{\frac{1}{p-1}}.
$$ 
Moreover, if $T$ denotes the time where the solution blows up, we have that
$$
T\leq\int_{R(0)}^{\infty}\frac{1}{\lambda s+\beta s^p+\gamma s^{2p-1}}ds. 
$$
\end{Theorem}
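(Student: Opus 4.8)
The plan is to use the classical eigenfunction (Kaplan) method, adapted to the point-interaction operator $A_Z$. Observe that the weight $e^{-Z|x|/2}$ in \eqref{erre} is, up to the constant $\sqrt{2/Z}$, the normalized ground state $\Psi_Z$ of $A_Z$, so that $R(t)$ is essentially the projection of $u(t)$ onto $\Psi_Z$; a direct check shows $e^{-Z|x|/2}\in D(A_Z)$ (it satisfies the jump condition $g'(0+)-g'(0-)=-Zg(0)$) and $A_Z\,e^{-Z|x|/2}=-\tfrac{Z^2}{4}\,e^{-Z|x|/2}$. Since $a,b>0$ and $g$ is positive, I would first record that the flow preserves positivity, so $u(x,t)\ge 0$ throughout its maximal interval of existence; this is what makes the powers $u^p,u^{2p-1}$ meaningful and the convexity estimates below applicable. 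Positivity follows from the positivity of the semigroup $e^{-tA_Z}$ together with $a,b>0$, via the Duhamel (mild) formulation and a standard iteration argument.

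Next I would differentiate $R$. Because $u\in C^1([0,T);H^1(\mathbb R))$ and $e^{-Z|x|/2}\in L^2(\mathbb R)$, $R$ is $C^1$; pairing the equation in \eqref{cachy12} against $e^{-Z|x|/2}$ and using self-adjointness of $A_Z$ together with the eigenrelation above, the linear contribution collapses to $(w+\tfrac{Z^2}{4})R=\lambda R$. For the nonlinear terms I would apply Jensen's inequality to the probability measure $d\mu=\tfrac{Z}{4}e^{-Z|x|/2}dx$ (note $\int_{\mathbb R} e^{-Z|x|/2}dx=4/Z$): convexity of $s\mapsto s^p$ and $s\mapsto s^{2p-1}$ bounds $\int u^p e^{-Z|x|/2}dx$ and $\int u^{2p-1}e^{-Z|x|/2}dx$ from below by the corresponding powers of $R$, producing the coefficients $\beta,\gamma$ introduced above. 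Altogether this yields the autonomous differential inequality
\begin{equation*}
R'(t)\ge \lambda R(t)+\beta R(t)^{p}+\gamma R(t)^{2p-1}=:F(R(t)).
\end{equation*}

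The remaining step is an ODE comparison. Writing $F(s)=s\big(\gamma (s^{p-1})^2+\beta s^{p-1}+\lambda\big)$ and using $\lambda<0<\gamma$, the quadratic $\gamma t^2+\beta t+\lambda$ has a single positive root $t_+$, and I would verify, through Vieta's relations applied to the reciprocal quadratic $\lambda z^2+\beta z+\gamma=0$ whose smaller root is $z_1$, that $R_1=t_+^{1/(p-1)}=[z_1\lambda/\gamma]^{1/(p-1)}$. Thus $F(s)>0$ precisely for $s>R_1$. The hypothesis $R(0)=\int g\,e^{-Z|x|/2}dx>R_1$ then forces $R$ to be strictly increasing with $R'(t)\ge F(R(t))>0$; separating variables gives $\int_{R(0)}^{R(t)}\frac{ds}{F(s)}\ge t$, and since $2p-1>1$ the integral $\int_{R(0)}^{\infty}\frac{ds}{F(s)}$ converges. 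Hence $R$ reaches $+\infty$ at some finite time $T\le \int_{R(0)}^{\infty}\frac{ds}{\lambda s+\beta s^{p}+\gamma s^{2p-1}}$. Finally, Cauchy--Schwarz gives $R(t)\le \|u(t)\|_{L^2}\,\|e^{-Z|x|/2}\|_{L^2}$, so $R(t)\to\infty$ forces $\|u(t)\|_{H^1}\to\infty$, i.e. genuine blow-up in $H^1(\mathbb R)$.

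The main obstacle I anticipate is the rigorous justification of the first two steps in the presence of the $\delta$-interaction: establishing that $e^{-tA_Z}$ is positivity preserving (so that $u(t)\ge 0$ and Jensen applies), and justifying the pairing $\langle A_Z u,e^{-Z|x|/2}\rangle=\langle u,A_Z e^{-Z|x|/2}\rangle$ along the orbit, which requires $u(t)\in D(A_Z)$ for $t>0$ (parabolic smoothing) and care with the jump condition at the origin. By contrast, the ODE comparison, the convergence of the time integral, and the identification of the threshold $R_1$ with $z_1$ are routine.
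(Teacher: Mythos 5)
Your proof is correct and is essentially the paper's own argument: the same Kaplan eigenfunction method with the ground state $e^{-Z|x|/2}$ of $A_Z$, the same collapse of the linear part to $\lambda R$, a lower bound on the nonlinear terms (H\"older in the paper; your Jensen step with $d\mu=\tfrac{Z}{4}e^{-Z|x|/2}dx$ is the same estimate), the same ODE comparison with the positive equilibrium $R_1$, and the same Cauchy--Schwarz conclusion, with your added care about positivity preservation and the pairing $\langle A_Z u, e^{-Z|x|/2}\rangle$ only making explicit what the paper assumes. One small caveat: Jensen actually yields the coefficients $a\left(\frac{Z}{4}\right)^{p-1}$ and $b\left(\frac{Z}{4}\right)^{2p-2}$, not the paper's $\beta=a\left(\frac{Z}{4}\right)^{\frac{p-1}{p}}$ and $\gamma=b\left(\frac{Z}{4}\right)^{\frac{2(p-1)}{2p-1}}$ --- the exponents in the paper's inequality (\ref{Holde123}) appear misprinted, since H\"older gives $\|\phi\|_{L^1}^{1-p}$ and $\|\phi\|_{L^1}^{2-2p}$, so for your claim that the step ``produces the coefficients $\beta,\gamma$'' to be literally true, the definitions of $\beta$, $\gamma$, and hence of $R_1$, should be read with the corrected exponents.
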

\begin{proof}
Let $\phi(x)=e^{-\frac{Z|x|}{2}}$, from (\ref{erre}), we obtain
\begin{equation*}\label{Estimat123}
\begin{aligned}
R'(t)&=\int_{-\infty}^{\infty} u_t\phi(x)dx
=\int_{-\infty}^{\infty} [u_{xx}+Z\delta(x)+wu+au^p+bu^{2p-1}]\phi(x)dx.
\end{aligned}
\end{equation*}
Now, since $Z^2/4$ is an eigenvalue of the self-adjoint operator $-A_{Z}$, with associated eigenfunction $\phi$, then we have 
\begin{equation}\label{dest}
\int_{-\infty}^{\infty} [-A_Zu+wu]\phi(x)dx=\left(\frac{Z^2}{4}+w\right)\int_{-\infty}^{\infty} u\phi(x)dx.
\end{equation}
On the other hand, Holder inequality and the positivity of the solution $u$ imply that
\begin{equation}\label{Holde123}
\int_{-\infty}^{\infty} [au^p+bu^{2p-1}]\phi(x)dx\geq a||\phi||_{L^1}^{\frac{1-p}{p}}R^{p}+b||\phi||_{L^1}^{\frac{2(1-p)}{2p-1}}R^{2p-1}.
\end{equation}
Since $||\phi||_{L^1}=4/Z$, from  (\ref{dest}), (\ref{Holde123}) we get the following differential inequality
\begin{equation}\label{Estim123}
\lambda R(t)+\beta R^p(t)+\gamma R^{2p-1}(t)\leq R'(t).
\end{equation} 
By standard arguments, it is possible to prove that if $R(0)>R_1$, with $R_1$ being the unique positive constant solution of (\ref{Estim123}), then, there exists $T>0$, such that,
\begin{equation}\label{e123}
T\leq\int_{R(0)}^{\infty}\frac{1}{\lambda s+\beta s^p+\gamma s^{2p-1}}ds\hspace{1.0cm}\text{and}\hspace{1.0cm}\lim_{t\to T}R(t)=\infty. 
\end{equation}
where, $h(s)=\lambda s+\beta s^p+\gamma s^{2p-1}$. Now, since
\begin{equation}\label{Inequ123}
R(t)\leq ||u(t)||_{L^{2}}||\phi||_{L^{2}}=||u(t)||_{L^{2}}\left[\frac{2}{Z}\right]^{\frac{1}{2}},
\end{equation}
from (\ref{e123}) and (\ref{Inequ123}), we conclude
$$
\lim_{t\to T}||u(t)||_{L^{2}}=\infty.
$$
\end{proof}
\begin{Remark}
Following the ideas and notations of Theorem \ref{blow}, it is possible to prove that for $Z>0$, $a,b>0$, and $\frac{Z^2}{4}\geq-w$, the solution of the Cauchy problem \textnormal{(\ref{cachy12})} with initial positive data $u(0)=g$ blows up in finite time. 
\end{Remark}
  
\begin{Remark} Let $Z>0$, $a,b>0$, $\frac{Z^2}{4}<-w$, and $\phi_{w,Z}$ given in \textnormal{(\ref{sol})} the positive equilibrium solution of the Cauchy problem \textnormal{(\ref{cachy12})} with $g\equiv \phi_{w,Z}$. Then, as a consequence of the Theorem \textnormal{\ref{blow}}, we conclude that
$$
\int_{-\infty}^{\infty}\phi_{w,Z}(x)e^{-\frac{Z}{2}|x|}dx\leq R_1,\hspace{1.0cm}\text{with}\hspace{1.0cm}R_1=\left[\frac{z_1\lambda}{\gamma}\right]^{\frac{1}{p-1}}.
$$
\end{Remark}


\section{Equilibrium solutions of the GDH equation}\label{equilibrios} 
In this section, we deal with the deduction of the explicit equilibrium solutions given in (\ref{sol}), (\ref{dif}) for the GDH equation (\ref{deltalog}) with $a>0$, $b\neq 0$.
We consider the cases, $Z=0$ and $Z\neq 0$. For $Z=0$ in (\ref{ltaordenada}), we have that $\phi\equiv \phi_w$ satisfies the nonlinear elliptic equation
\begin{equation}\label{ordendoisz0}
\phi''+w\phi+a\phi^{p}+b\phi^{2p-1}=0.
\end{equation}
The quadrature method and the boundary condition of the function $\phi\rightarrow 0$ as $|x|\rightarrow \infty$, imply that
\begin{equation}\label{ord1z0}
[\phi']^2+w\phi^2+2\alpha\phi^{p+1}+\beta\phi^{2p}=0,
\end{equation}
where, $\alpha=a/(p+1)$, $\beta=b/p$. 
In order to obtain an explicit solution of the equation (\ref{ordendoisz0}), we will assume that $w<0$ and $0<\phi$. Replacing 
$$
y=\phi^{p-1},
$$ 
in (\ref{ord1z0}), we obtain
\begin{equation}\label{inttrans}
\frac{1}{(p-1)\sqrt{-w}}\bigintssss{\frac{dy}{y\sqrt{1+2\alpha w^{-1}y+\beta w^{-1}y^2}}}=x
\end{equation}
so, by assuming $c$ a positive constant, we have the formula
\begin{equation*}\label{inttransit}
\bigintssss{\frac{dy}{y\sqrt{1+2\alpha w^{-1}y+\beta w^{-1}y^2}}}=-\ln\left[c\left(\frac{\alpha y+w}{-y}+\frac{\sqrt{\beta wy^2+2w\alpha y +w^2}}{y}\right)\right].
\end{equation*}
Now, for $c=1/\sqrt{\alpha^2-\beta w}$, and recalling that
$$
\text{arccosh}(x)=\ln(x-\sqrt{x^2-1}),\hspace{0.5cm}\text{for all}\hspace{0.5cm}x\geq 1,  
$$ 
we can rewrite the integral in (\ref{inttrans}) as 
\begin{equation*}\label{inttransdo}
\bigintssss{\frac{dy}{y\sqrt{1+2\alpha w^{-1}y+\beta w^{-1}y^2}}}=-\text{arccosh}\left(\frac{\alpha y+w}{-y\sqrt{\alpha^2-\beta w}}\right).
\end{equation*}
Note that a solution of the equation (\ref{ordendoisz0}) is given implicitly by the formula
$$
-\text{arccosh}\left(\frac{\alpha \phi^{p-1}+w}{-\phi^{p-1}\sqrt{\alpha^2-\beta w}}\right)=(p-1)\sqrt{-w}x,
$$
or
\begin{equation}\label{numerator}
\phi(x)=\left[\frac{-w}{\alpha+\sqrt{\alpha^2-\beta w}\text{ cosh}((p-1)\sqrt{-w}x}\right]^{\frac{1}{p-1}}=\left[\frac{\alpha}{-w}+\frac{\sqrt{\alpha^2-\beta w}}{-w}\text{ cosh}\left((p-1)\sqrt{-w}x\right)\right]^{\frac{-1}{p-1}}
\end{equation} 
with $-w>0$ and $\alpha^2-\beta w>0$. Now, we notice that if $b>0$, the solution $\phi$ in (\ref{numerator}) is well defined for all $w<0$. In contrast, if $b<0$, the solution $\phi$ in (\ref{numerator}) is well defined for $w$ satisfying the following condition 
$$
0<-w<-\frac{pa^2}{(p+1)^2b}.
$$
Now, we proceed to calculate solutions of the equation (\ref{ltaordenada}) when $Z\neq 0$. 
The following lemma shows us some of the properties that a solution $\phi\in H^1(\mathbb{R})$ of the equation (\ref{ltaordenada}) must satisfy.
\begin{Lemma}\label{regul} Let $\phi\in H^1(\mathbb{R})$ be a solution of \textnormal{(\ref{ltaordenada})}, then $\phi$ satisfies the following properties,
\begin{subequations}\label{initialvalue3}
\begin{align}
&\phi\in C^j(\mathbb{R}-\{0\})\cap C(\mathbb{R}),\text{ }\text{ } j=1,2.\label{reg}\\
&\phi''(x)+w\phi(x)+a\phi^{p}(x)+b\phi^{2p-1}(x)=0,\hspace{0.6cm}\text{for all}\hspace{0.6cm} x\neq 0. \label{eqdifx}\\
&\phi'(0+)-\phi'(0-)=-Z\phi(0).\label{condsal}\\
&\phi'(x),\phi(x)\rightarrow 0, \hspace{0.6cm}\text{if}\hspace{0.3cm} |x|\rightarrow\infty.\label{complimitado}
\end{align}
\end{subequations}
\end{Lemma}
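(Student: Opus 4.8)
The plan is to read (\ref{ltaordenada}) as a distributional identity and then bootstrap regularity, extract the jump condition from the coefficient of the Dirac mass, and obtain the decay of the derivative from a first-integral argument. First, since $\phi\in H^1(\mathbb{R})$, the Sobolev embedding $H^1(\mathbb{R})\hookrightarrow C(\mathbb{R})$ gives that $\phi$ is continuous and bounded, that $\phi(0)$ is well defined, and that $\phi(x)\to 0$ as $|x|\to\infty$. In particular the distribution $Z\delta(x)\phi$ equals $Z\phi(0)\delta(x)$, so (\ref{ltaordenada}) reads, in the sense of distributions,
$$
\phi''+Z\phi(0)\delta(x)+w\phi+a\phi^{p}+b\phi^{2p-1}=0,
$$
where $\phi''$ denotes the distributional second derivative. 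Restricting to $\mathbb{R}\setminus\{0\}$, where the Dirac mass is absent, we get $\phi''=-(w\phi+a\phi^{p}+b\phi^{2p-1})$; the right-hand side is continuous because $\phi$ is, so $\phi''$ is continuous away from the origin and hence $\phi\in C^2(\mathbb{R}\setminus\{0\})$. This proves (\ref{reg}) and (\ref{eqdifx}).

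Next I would recover the transmission condition (\ref{condsal}). Since $\phi$ is continuous on $\mathbb{R}$ and of class $C^2$ on each side of the origin, its distributional second derivative decomposes as
$$
\phi''=\{\phi''\}+\big(\phi'(0+)-\phi'(0-)\big)\delta(x),
$$
where $\{\phi''\}$ denotes the classical second derivative computed on $\mathbb{R}\setminus\{0\}$. Substituting this expression into the displayed distributional equation and using (\ref{eqdifx}) to cancel the regular part, the coefficients of the Dirac masses must balance, which yields $(\phi'(0+)-\phi'(0-))+Z\phi(0)=0$, exactly (\ref{condsal}).

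For the decay (\ref{complimitado}), the statement $\phi(x)\to0$ is already granted by the $H^1$ embedding, so it remains only to show $\phi'(x)\to0$. Here I would invoke the first integral of (\ref{eqdifx}): multiplying by $\phi'$ and integrating on $(0,\infty)$ shows, in view of (\ref{ord1z0}), that
$$
E(x):=\tfrac12(\phi'(x))^2+\tfrac{w}{2}\phi(x)^2+\alpha\phi(x)^{p+1}+\tfrac{\beta}{2}\phi(x)^{2p}
$$
is constant for $x>0$. Since $\phi(x)\to0$, every term except the kinetic one tends to $0$, so $(\phi'(x))^2\to 2E_\infty$ for some constant $E_\infty\ge0$. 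But $\phi'\in L^2(\mathbb{R})$, so $(\phi')^2$ is integrable near $+\infty$, which forces $E_\infty=0$; hence $\phi'(x)\to0$ as $x\to+\infty$, and the identical argument on $(-\infty,0)$ handles $x\to-\infty$.

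The main obstacle I anticipate is precisely this last step. Unlike the continuity, the interior $C^2$ regularity, and the jump condition, which all follow from essentially algebraic manipulation of the distributional equation, the decay of $\phi'$ is not a direct consequence of membership in $H^1(\mathbb{R})$; it genuinely requires combining the conservation of the energy $E(x)$ with the $L^2$ integrability of $\phi'$ in order to rule out a nonzero limiting slope.
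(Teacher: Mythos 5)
Your proof is correct, and it departs from the paper's argument in two of the four items. For the interior regularity (\ref{reg})--(\ref{eqdifx}) the paper does not argue through the continuity of $\phi''$; it runs a cutoff bootstrap in Sobolev spaces: for $\xi\in C_0^{\infty}(\mathbb{R}\setminus\{0\})$ one checks that $(\xi\phi)''+w(\xi\phi)=\xi''\phi+2\xi'\phi'-a\xi\phi^{p}-b\xi\phi^{2p-1}\in L^2(\mathbb{R})$, so $\xi\phi\in H^2(\mathbb{R})$ and hence $\phi\in H^2(\mathbb{R}\setminus\{0\})\cap C^1(\mathbb{R}\setminus\{0\})$, with (\ref{eqdifx}) then obtained by density of the test functions; your route via $H^1(\mathbb{R})\hookrightarrow C(\mathbb{R})$ and classical ODE regularity away from the origin is more elementary and reaches the same conclusion. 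For (\ref{condsal}) the paper integrates (\ref{ltaordenada}) over $(-\epsilon,\epsilon)$ and sends $\epsilon\to0$, which is the same computation as your matching of Dirac coefficients; for that matching to be legitimate you should record in one line that the one-sided limits $\phi'(0\pm)$ exist, which follows because $\phi''=-(w\phi+a\phi^{p}+b\phi^{2p-1})$ is bounded near $0$ on each side, so $\phi'$ extends continuously to $0\pm$. The genuinely different step is (\ref{complimitado}): the paper extracts the decay of $\phi'$ from the same bootstrap (taking a cutoff equal to $1$ near infinity gives $\phi\in H^2$ on a neighborhood of infinity, and $H^1$ functions on a half-line vanish at infinity), whereas you conserve the energy $E(x)$ on each half-line and exclude a nonzero limiting slope using $\phi'\in L^2(\mathbb{R})$. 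Both are sound; the bootstrap is uniform machinery that would survive situations without a first integral (e.g.\ $x$-dependent coefficients), while your energy argument is self-contained and in fact proves $E\equiv 0$, i.e.\ it \emph{derives} the identity (\ref{ord1z0}) for the solution rather than assuming it --- so your phrase ``in view of (\ref{ord1z0})'' should be replaced by ``multiplying (\ref{eqdifx}) by $\phi'$'', since in the paper (\ref{ord1z0}) is established only in the case $Z=0$ under a presupposed boundary condition.
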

\begin{proof}
The proof of this lemma follows the ideas of the proof of Lemma 3.1 in \cite{jeanjean}. The properties (\ref{reg}) and (\ref{complimitado}) are proved by a standard boostrap argument, namely, for all $\xi\in C_0^{\infty}(\mathbb{R}\setminus\{0\})$, the function $\xi\phi$ satisfies 
$$
(\xi\phi)''+w(\xi\phi)=\xi''\phi+2\xi'\phi'-a\xi\phi^p-b\xi\phi^{2p-1}
$$
in the sense of distributions. Since the right hand side of the previous identity is in $L^2(\mathbb{R})$, then $\xi\phi\in H^2(\mathbb{R})$, that is to say, $\phi\in H^2(\mathbb{R}\setminus \{0\})\cap C^1(\mathbb{R}\setminus\{0\})$. The equation (\ref{eqdifx}) follows from the fact that $C_0^{\infty}(\mathbb{R}\setminus\{0\})$ is dense in $L^2(\mathbb{R})$. In relation to (\ref{condsal}), it is enough to ``integrate'' (\ref{ltaordenada}) from $-\epsilon$ to $\epsilon$,
$$
\int_{-\epsilon}^{\epsilon}\phi'' dx+w \int_{-\epsilon}^{\epsilon}\phi dx +Z\int_{-\epsilon}^{\epsilon}\delta(x)\phi dx+\int_{-\epsilon}^{\epsilon}a\phi^p+b\phi^{2p-1} dx=0.
$$
If $\epsilon\to 0$, we obtain that $\phi'(0+)-\phi'(0-)=-Z\phi(0)$.
\end{proof}

We notice that the function 
\begin{equation}\label{esest}
\phi_{s}(x):=\phi(|x|-s),\hspace{0.5cm}s\in\mathbb{R}, 
\end{equation}
where $\phi$ given in (\ref{numerator}), satisfies all of the properties of the previous lemma except possibly the jump condition (\ref{condsal}). Since $\phi_s$ is an even function the condition (\ref{condsal}) can be rewritten as
\begin{equation*}\label{jump}
\phi_s'(0+)=-\frac{Z}{2}\phi_s(0),\hspace{0.5cm}\text{or equivalently,}\hspace{0.5cm}\phi'(s)=\frac{Z}{2}\phi(s).
\end{equation*} 
From (\ref{numerator}), we obtain that
\begin{equation}\label{translat}
\frac{\sqrt{\alpha^2-\beta w}\text{ senh}((p-1)\sqrt{-w}s)}{\alpha+\sqrt{\alpha^2-\beta w}\text{ cosh}((p-1)\sqrt{-w}s)}=\frac{-Z}{2\sqrt{-w}}.
\end{equation}  
If we define $R:(-\infty,\infty)\rightarrow (-1,1)$ by
\begin{equation}\label{Erre}
R(s)=\frac{\sqrt{\alpha^2-\beta w}\text{ senh}((p-1)\sqrt{-w}s)}{\alpha+\sqrt{\alpha^2-\beta w}\text{ cosh}((p-1)\sqrt{-w}s)}. 
\end{equation}
We have that $R$ is an odd, increassing diffeomosphism between the intervals $(-\infty,\infty)$ and (-1,1). In particular, from the expression (\ref{translat}), we conclude that 
$$
\frac{Z^2}{4}<-w,
$$
and furthermore, that
\begin{equation}\label{ese}
s=R^{-1}\left(\frac{-Z}{2\sqrt{-w}}\right).
\end{equation}
Finally, from (\ref{numerator}), (\ref{esest}) and (\ref{ese}), we can conclude that for $\alpha=\frac{a}{p+1}$ and  $\beta=\frac{b}{p}$, the function $\phi_{w,Z}$ given by
\begin{equation}\label{numeratorwithZ}
\phi_{w,Z}(x)=\left[\frac{\alpha}{-w}+\frac{\sqrt{\alpha^2-\beta w}}{-w}\text{ cosh}\left((p-1)\sqrt{-w}\left(|x|+R^{-1}\left(\frac{Z}{2\sqrt{-w}}\right)\right)\right)\right]^{\frac{-1}{p-1}}
\end{equation}
is a solution of the equation (\ref{ltaordenada}) providing 
\begin{enumerate}[i)]
\item For $a>0$, $b> 0$: $\frac{Z^2}{4}<-w$. 
\item For $a>0$, $b<0$: $\frac{Z^2}{4}<-w<-\frac{pa^2}{(p+1)^2b}$.
\end{enumerate}
We observe that if $Z=0$ in the previous formula, we recover the function $\phi$ given in (\ref{numerator}), namely $\phi_{w,0}=\phi$. 
Thus, we can establish an existence result of equilibrium solutions for (\ref{ltaordenada}).
\begin{Theorem}\label{solution10} i) For $a>0$, $b>0$ and $\frac{Z^2}{4}<-w$, the family of functions $\phi_{w,Z}$ given in $(\ref{numeratorwithZ})$ are equilibrium solutions of the equation \textnormal{(\ref{deltalog})}. Moreover, the mapping $Z\in(-2\sqrt{-w},2\sqrt{-w})\rightarrow\phi_{w,Z}$ is a real analytic function.\\\\
ii) For $a>0$, $b<0$ and $\frac{Z^2}{4}<-w<-\frac{pa^2}{(p+1)^2b}$, the family of functions $\phi_{w,Z}$  given in \textnormal{(\ref{numeratorwithZ})} are equilibrium solutions of the equation \textnormal{(\ref{deltalog})}. Moreover, the mapping $Z\in(-2\sqrt{-w},2\sqrt{-w})\rightarrow\phi_{w,Z}$ is a real analytic function.  
\end{Theorem}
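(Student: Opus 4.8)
The statement splits into an existence claim and an analyticity claim, and I would treat them separately. For the existence claim almost all of the work is already contained in the computation preceding the theorem, so I would organize the proof as a verification that $\phi_{w,Z}$ lies in $\mathcal D(\partial_{xx}+Z\delta(x))$ together with a reversal of Lemma~\ref{regul}. Concretely, set $s(Z)=R^{-1}\!\big(Z/(2\sqrt{-w})\big)$, which is well defined precisely when $Z/(2\sqrt{-w})\in(-1,1)$, i.e. when $Z^2/4<-w$; in case ii) the second bound $-w<-pa^2/((p+1)^2b)$ is exactly what guarantees $\alpha^2-\beta w>0$, so that the base of the power in (\ref{numerator}) stays positive and $\phi_{w,0}$, hence $\phi_{w,Z}(x)=\phi_{w,0}(|x|+s(Z))$ from (\ref{esest})--(\ref{ese}), is well defined and strictly positive. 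Because $\phi_{w,0}$ is smooth, even, and decays exponentially together with its derivatives, $\phi_{w,Z}\in H^1(\mathbb R)\cap H^2(\mathbb R\setminus\{0\})$; on each half-line it satisfies the autonomous equation (\ref{eqdifx}) since that ODE is invariant under translation and reflection; and the relation (\ref{translat})--(\ref{ese}) that defines $s(Z)$ is precisely the jump condition (\ref{condsal}). Thus $\phi_{w,Z}\in\mathcal D(\partial_{xx}+Z\delta(x))$, and running the argument of Lemma~\ref{regul} backwards shows that $\phi_{w,Z}$ solves (\ref{ltaordenada}).

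For the analyticity claim I would write $\phi_{w,Z}=\Phi(s(Z))$, where $\Phi(s):=\phi_{w,0}(|\cdot|+s)$, and prove that $Z\mapsto s(Z)$ is real analytic into $\mathbb R$ and that $\Phi$ is real analytic into $H^1(\mathbb R)$; the conclusion then follows by composition. The map $R$ in (\ref{Erre}) is a quotient of entire functions whose denominator $\alpha+\sqrt{\alpha^2-\beta w}\,\cosh((p-1)\sqrt{-w}\,s)$ never vanishes, hence $R$ is real analytic, and since it is an increasing diffeomorphism ($R'>0$) the analytic inverse function theorem gives $R^{-1}\colon(-1,1)\to\mathbb R$ real analytic. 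Consequently $Z\mapsto s(Z)=R^{-1}(Z/(2\sqrt{-w}))$ is real analytic on the interval $(-2\sqrt{-w},2\sqrt{-w})$, which is exactly the range of $Z$ asserted in the theorem.

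The heart of the matter is the analyticity of the superposition map $\Phi$. Writing $\phi_{w,0}(y)=[A+B\cosh(\kappa y)]^{-1/(p-1)}$ with $B,\kappa>0$ and $A+B\cosh(\kappa y)\ge A+B>0$ on $\mathbb R$, I would first observe that $\phi_{w,0}$ extends to a function holomorphic on a strip $\{\,|\operatorname{Im}y|<\eta\,\}$ and that, on that strip, $\phi_{w,0}$ and $\phi_{w,0}'$ obey a uniform bound of the form $|\phi_{w,0}(y)|+|\phi_{w,0}'(y)|\le Ce^{-\sqrt{-w}\,\operatorname{Re}y}$. Using this exponential decay I would show that for complex $s$ with $|\operatorname{Im}s|<\eta$ the function $x\mapsto\phi_{w,0}(|x|+s)$ belongs to the complexification $H^1(\mathbb R;\mathbb C)$ and that $s\mapsto\phi_{w,0}(|\cdot|+s)$ is holomorphic there (its $H^1$ difference quotient converges to $\phi_{w,0}'(|\cdot|+s)$); restricting to real $s$ yields the real analyticity of $\Phi$. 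The only subtlety at $x=0$ is that $\Phi(s)$ has a jump in its first derivative, which is harmless for membership in $H^1$.

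I expect the last paragraph to be the main obstacle: upgrading the pointwise holomorphy of $\phi_{w,0}$ to genuine $H^1(\mathbb R)$-valued holomorphy of $\Phi$. The technical content lies entirely in the uniform exponential decay estimates on the strip --- these are what keep the $L^2$ norm and the norm of the $x$-derivative finite and holomorphic in $s$ --- while the existence part and the analyticity of $Z\mapsto s(Z)$ are comparatively routine.
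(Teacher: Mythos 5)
Your proposal is correct, and the two halves of it relate to the paper differently. On the existence claim you are in substance reproducing the paper's own argument: the paper prints no separate proof of Theorem \ref{solution10}, because the computations of Section \ref{equilibrios} --- the quadrature formula (\ref{numerator}), Lemma \ref{regul}, the ansatz (\ref{esest}), and the matching condition (\ref{translat})--(\ref{ese}) --- \emph{are} the proof, and your verification (membership in ${\cal D}(\partial_{xx}+Z\delta(x))$, the half-line ODE, the jump condition, then Lemma \ref{regul} read in reverse) is that construction reorganized; note only that your $s(Z)=R^{-1}\left(Z/(2\sqrt{-w})\right)$ is the paper's $-s$ from (\ref{ese}), consistent by oddness of $R$. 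Where you genuinely go beyond the paper is the analyticity claim: the paper never proves it, the sole justification being the one-line assertion inside the proof of Lemma \ref{lem6.5} that since $\phi$ and $R$ are analytic, so is $\phi_{w,Z}$, with no target space specified --- yet a quantitative analyticity statement is exactly what the perturbation machinery of Lemmas \ref{lem6.5} and \ref{piomega} later relies on. Your factorization $\phi_{w,Z}=\Phi(s(Z))$ supplies it: $s$ is analytic by the analytic inverse function theorem, whose hypothesis holds everywhere since $R'(s)=\kappa B\left(\alpha\cosh(\kappa s)+B\right)/\left(\alpha+B\cosh(\kappa s)\right)^2>0$ with $B=\sqrt{\alpha^2-\beta w}$ and $\kappa=(p-1)\sqrt{-w}$; and the complexification of $\Phi$ works as you sketch provided you take the strip half-width $\eta<\pi/(2\kappa)$, for then $\operatorname{Re}\cosh(\kappa(u+iv))=\cosh(\kappa u)\cos(\kappa v)>0$, so the base $\frac{\alpha}{-w}+\frac{B}{-w}\cosh(\kappa y)$ stays in the right half-plane, the principal branch of $z^{-1/(p-1)}$ is holomorphic there (this branch selection for the fractional power is the one detail your sketch leaves implicit), and the same lower bound on the real part yields your uniform estimate $|\phi_{w,0}(y)|+|\phi_{w,0}'(y)|\leq Ce^{-\sqrt{-w}\,\operatorname{Re}y}$, from which $H^1(\mathbb{R};\mathbb{C})$-holomorphy of $s\mapsto\phi_{w,0}(|\cdot|+s)$ follows by dominated convergence of the difference quotients. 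In short, your existence argument matches the paper's route, while your analyticity argument turns an assertion the paper leaves unproved into an actual proof; it would be worth stating explicitly that the analyticity is into $H^1(\mathbb{R})$, since the theorem is silent on the topology and that is the one the later spectral analysis uses.
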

Figure 1 shows the profile of $\phi_{\omega, Z}$ in (\ref{numeratorwithZ}) in  the case $p=5$, $a=6$ $b=-1$.
\begin{figure}[htb]
  \centering
  \subfigure[$\phi_{w,Z}$: $w=-4$, $Z=-2$.]{
      \includegraphics[scale=0.3]{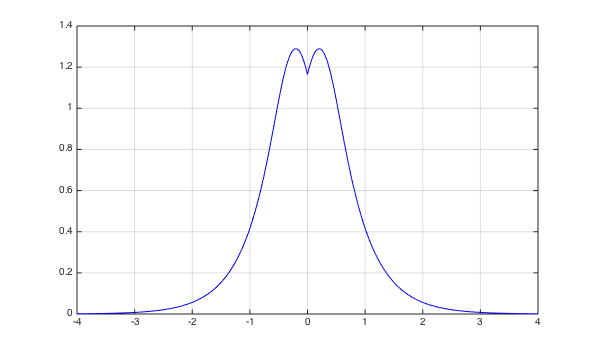}}
      \qquad
	\subfigure[$\phi_{w,Z}$: $w=-4$, $Z=2$.]{
      \includegraphics[scale=0.3]{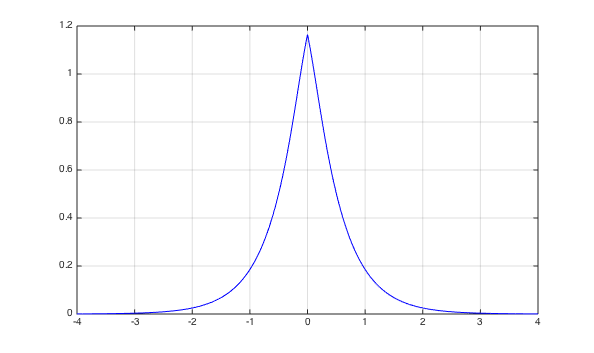}}
  \qquad
	\subfigure[$\phi_{w,Z}$: $w=-4$, $Z=0$.]{
      \includegraphics[scale=0.3]{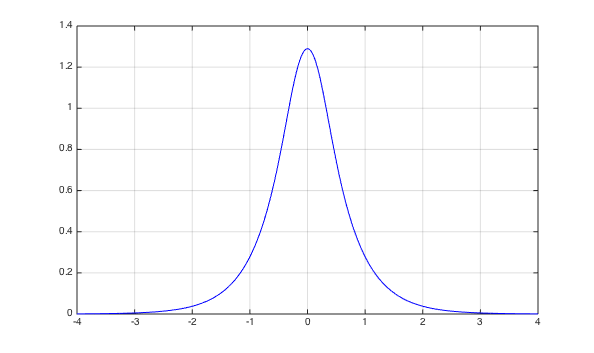}}
  \label{gras} 
	\caption{The equilibrium solutions $\phi_{w,Z}$ for $Z<0$, $Z=0$ and $Z>0$.}
   \end{figure}

\begin{Remark}
Considering $b=0$, $a>0$, and $\frac{Z^2}{4}<-w$, the function $\phi_{w,Z}$ in \textnormal{(\ref{numeratorwithZ})} can be written in the following form:
$$
h_{w,Z}(x)=\left[\frac{(p+1)(-w)}{2a}\sec^2\left(\frac{(p-1)\sqrt{-w}}{2}|x|+\tanh^{-1}\left(\frac{Z}{2\sqrt{-w}}\right) \right)\right]^{\frac{1}{p-1}},
$$
it is an equilibrium solution of the GDH equation, for $b=0$.\\
\end{Remark}
\section{Spectral properties of ${\cal L}_{Z}$}\label{spectral}

This section is devoted to the study of the spectral properties of the operators $\mathcal L_{Z}$ given in (\ref{L11,L223delta}). We establish the relation  between the second variation of the functional $S_{w,Z}:H^{1}(\mathbb{R})\to\mathbb{R}$ defined by
$$
S_{w,Z}(u)=\int\limits_{\mathbb{R}}\left(\frac{(u')^2}{2} -\frac{wu^2}{2} -\frac{au^{p+1}}{p+1}-\frac{bu^{2p}}{2p}\right)dx-\frac{Zu^2(0)}{2},
$$ 
at $\phi_{\omega, Z}$ and the self-adjoint operators $\mathcal L_{Z}$. It can be easily verified that the equilibrium solution $\phi_{w}=\phi_{w, Z}$  is a critical point of $S_{w, Z}$. Indeed, for $u,v\in H^1(\mathbb R)$,
 \begin{equation*}
 S_{w, Z}'(u)v=\left.\frac{d}{dt}S_{\omega, Z}(u+tv)\right|_{t=0}=\int\limits_{\mathbb{R}}u'v' \, dx-\int\limits_{\mathbb{R}}(wu+au^p+bu^{2p-1})v \, dx-Z u(0)v(0).
 \end{equation*}
Since $\phi_{w}$  satisfies \eqref{ltaordenada}, $S_{w,Z}'(\phi_{\omega})=0$. 
In the same way, we get 
\begin{equation}\label{q_form}
S_{w, Z}''(\phi_{w})(u_1,v_1)=\int\limits_{\mathbb{R}}u_1'v_1'dx-\int\limits_{\mathbb{R}}(w+ ap\phi^{p-1}_{w} + b(2p-1)\phi^{2p-2}_{w})u_1v_1 \, dx -Z u_1(0)v_1(0) 
\end{equation}
with $D(S_{w, Z}''(\phi_{w}))=H^1(\mathbb R)\times H^1(\mathbb R)$.
Note that the form $S_{w, Z}''(\phi_{w})$ is bilinear 
bounded from below and closed. Therefore, by  the First Representation Theorem (see  \cite[Chapter VI, \S 2.1]{Kato}), it defines an operator $-\mathcal L_{Z}$ such that
 \begin{equation}\label{opera}
 \left\{
 \begin{aligned}
 &D(\mathcal {-L}_Z)=\{v_1\in H^1(\mathbb R): \exists v_2\in   L^2(\mathbb{R})\; s.t.\; \forall z\in H^1(\mathbb R), \; S_{w, Z}''(\phi_{w})(v_1,z)=(v_2,z) \},\\ 
 &\mathcal {-L}_Z v_1=v_2.
 \end{aligned}
 \right.
 \end{equation}
\begin{Theorem}\label{repres}
The operator $-\mathcal L_{Z}$ determined in \textnormal{(\ref{opera})} is given by 
\begin{equation*}\label{L12}
-\mathcal L_{Z}=-\frac{d^2}{dx^2} -\omega -ap\phi^{p-1}_{w} - b(2p-1)\phi^{2p-2}_{w},
\end{equation*} 
on the domain $D_Z:= D\left(-{\cal L}_{Z}\right)=\left\{g\in H^1(\mathbb{R})\cap H^2(\mathbb{R}-\left\{0\right\})|g'(0+)-g'(0-)=-Zg(0) \right\}$.
\end{Theorem}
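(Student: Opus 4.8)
The plan is to apply the domain characterization in (\ref{opera}) directly, separating the analysis into an interior regularity step on $\mathbb{R}\setminus\{0\}$ and a boundary step at the origin that extracts the jump condition. Throughout, I abbreviate the potential by $V(x):=\omega+ap\phi_{w}^{p-1}(x)+b(2p-1)\phi_{w}^{2p-2}(x)$, which is bounded and continuous since $\phi_{w}\in H^1(\mathbb{R})\cap L^\infty(\mathbb{R})$ with $\phi_w\to 0$ at infinity. By the First Representation Theorem the closed, symmetric, bounded-below form $S_{w,Z}''(\phi_w)$ in (\ref{q_form}) determines the self-adjoint operator $-\mathcal{L}_Z$ via (\ref{opera}); it therefore suffices to show that $v_1\in D(-\mathcal{L}_Z)$ if and only if $v_1\in H^1(\mathbb{R})\cap H^2(\mathbb{R}\setminus\{0\})$ with $v_1'(0+)-v_1'(0-)=-Zv_1(0)$, and that in that case $-\mathcal{L}_Z v_1=-v_1''-Vv_1$.

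First I would prove the inclusion $D(-\mathcal{L}_Z)\subseteq D_Z$. Take $v_1\in D(-\mathcal{L}_Z)$ with witness $v_2\in L^2(\mathbb{R})$, so that $S_{w,Z}''(\phi_w)(v_1,z)=(v_2,z)$ for all $z\in H^1(\mathbb{R})$. Testing this identity against $z\in C_0^\infty(\mathbb{R}\setminus\{0\})$ annihilates the boundary term $-Zv_1(0)z(0)$ and yields $\int_{\mathbb{R}}v_1'z'\,dx=\int_{\mathbb{R}}(Vv_1+v_2)z\,dx$, i.e. $-v_1''=Vv_1+v_2$ in $\mathcal{D}'(\mathbb{R}\setminus\{0\})$. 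Since the right-hand side lies in $L^2(\mathbb{R})$, this forces $v_1\in H^2$ on each half-line, hence $v_1\in H^2(\mathbb{R}\setminus\{0\})$, and $-\mathcal{L}_Z v_1=-v_1''-Vv_1=v_2$ almost everywhere. This already produces the claimed differential expression.

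Next I would recover the transmission condition by testing against general $z\in H^1(\mathbb{R})$. Now that $v_1\in H^2$ on each half-line, I integrate by parts separately on $(-\infty,0)$ and $(0,\infty)$; the terms at $\pm\infty$ vanish because $v_1,v_1'\to 0$, and the two one-sided contributions at the origin combine into
\begin{equation*}
\int_{\mathbb{R}}v_1'z'\,dx=\big(v_1'(0-)-v_1'(0+)\big)z(0)-\int_{\mathbb{R}}v_1''z\,dx.
\end{equation*}
Substituting this into the defining identity and cancelling the bulk integrals via $-v_1''-Vv_1=v_2$, everything collapses to $\big(v_1'(0-)-v_1'(0+)-Zv_1(0)\big)z(0)=0$ for all $z\in H^1(\mathbb{R})$. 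Choosing $z$ with $z(0)\neq 0$ gives $v_1'(0+)-v_1'(0-)=-Zv_1(0)$, so $v_1\in D_Z$. For the reverse inclusion $D_Z\subseteq D(-\mathcal{L}_Z)$, I would take $g\in D_Z$, set $v_2:=-g''-Vg\in L^2(\mathbb{R})$, and run the same integration by parts backwards: the jump condition is precisely what makes the origin boundary term cancel against $-Zg(0)z(0)$, so $S_{w,Z}''(\phi_w)(g,z)=(v_2,z)$ holds for every $z\in H^1(\mathbb{R})$ and thus $g\in D(-\mathcal{L}_Z)$ with $-\mathcal{L}_Z g=v_2$.

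The hard part is not any single computation but the careful bookkeeping of the one-sided boundary term at the origin: this is exactly where the Dirac interaction, encoded in the quadratic-form term $-Zu_1(0)v_1(0)$, is converted into the transmission condition defining $D_Z$. Once the boundary step is handled correctly, the remaining ingredients—boundedness of $V$, elliptic regularity on the half-lines, and the decay of $v_1,v_1'$ at infinity—are routine, and the symmetry/boundedness/closedness hypotheses needed to invoke the First Representation Theorem have already been recorded before the statement.
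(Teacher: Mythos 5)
Your proposal is correct and is exactly the standard form-representation argument that the paper itself does not write out, deferring instead to Le Coz \emph{et al.} \cite{LeCoz}, where the same two-step scheme (interior regularity via test functions in $C_0^\infty(\mathbb{R}\setminus\{0\})$, then half-line integration by parts to convert the form term $-Zu_1(0)v_1(0)$ into the jump condition, plus the routine reverse inclusion) is carried out. In effect you have supplied the details the paper omits, with correct signs and with the needed decay and boundedness facts properly accounted for.
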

\begin{proof}
The proof of this theorem follows the same lines as in Le Coz {\it et al.} \cite{LeCoz}.
\end{proof}

Now, we proceed to establish some specific spectral properties of the operator $-{\cal L}_{Z}$. Here we will consider the parameters $a, b$, $w$ and $Z$ such that satisfy the relations in Theorem \ref{solution10}.
\begin{Theorem}\label{espl0} For $w<0$ and $\phi_{w,0}$ being the equilibrium solution $\phi_{w,Z}$ with $Z=0$, we consider the self-adjoint linear operator $-{\cal L}_0:H^2(\mathbb{R})\rightarrow L^2(\mathbb{R})$
$$
-{\cal L}_0=-\frac{d^2}{dx^2}-w-ap\phi^{p-1}_{w,0}-b(2p-1)\phi^{2p-2}_{w,0}.
$$
Then $-{\cal L}_0$ has a unique negative simple eigenvalue $-\lambda$, with $\lambda>0$. Zero is a simple eigenvalue with eigenfunction $\phi'$. The rest of the espectrum is away from zero and the essential spectrum is given by the interval $[-w,\infty)$.
\end{Theorem}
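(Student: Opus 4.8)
The plan is to treat $-\mathcal{L}_0$ as a one-dimensional Schr\"odinger operator $-\frac{d^2}{dx^2}-w-V(x)$ with bounded, exponentially decaying potential $V(x)=ap\phi_{w,0}^{p-1}(x)+b(2p-1)\phi_{w,0}^{2p-2}(x)$, and to extract every assertion from Sturm--Liouville oscillation theory. I would first settle the essential spectrum: since $\phi_{w,0}$ decays exponentially as $|x|\to\infty$ (visible from the $\cosh$ in (\ref{numerator})), the potential $V$ is a relatively compact perturbation of $-\frac{d^2}{dx^2}-w$, so Weyl's theorem leaves the essential spectrum unchanged, $\Sigma_{ess}(-\mathcal{L}_0)=\Sigma_{ess}(-d^2/dx^2-w)=[-w,\infty)$. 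Because $w<0$, the threshold $-w$ is strictly positive; hence every spectral point lying below $-w$ is a discrete, isolated eigenvalue, and on the whole line such eigenvalues are automatically simple (a Wronskian argument forbids two linearly independent $L^2$ solutions at the same energy).

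Next I would identify $0$ as an eigenvalue by differentiating the profile equation (\ref{ordendoisz0}) in $x$, which yields $(\phi')''+[w+ap\phi_{w,0}^{p-1}+b(2p-1)\phi_{w,0}^{2p-2}]\phi'=0$, i.e. $-\mathcal{L}_0\phi'=0$; since $\phi'$ also decays exponentially it lies in $H^2(\mathbb{R})$, so $0$ is an eigenvalue with eigenfunction $\phi'=\phi'_{w,0}$. The key structural observation is that $\phi_{w,0}$ is even and has a strict global maximum at $x=0$ (the bracket in (\ref{numerator}) is strictly increasing in $|x|$), whence $\phi'$ is odd, vanishes only at the origin, and changes sign there. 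Thus the eigenfunction associated with $0$ has \emph{exactly one} node.

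The heart of the argument is then the nodal count. Ordering the eigenvalues below the essential spectrum as $\mu_0<\mu_1<\cdots$, Sturm oscillation theory says the eigenfunction for $\mu_n$ has exactly $n$ zeros. Since $\phi'$ has precisely one zero and satisfies $-\mathcal{L}_0\phi'=0$, we must have $\mu_1=0$. Consequently there is exactly one eigenvalue $\mu_0=:-\lambda<0$ below $0$, simple, with a nodeless eigenfunction that may be taken strictly positive; and $0$ is itself simple. Every remaining spectral point is either a discrete eigenvalue in $(0,-w)$ or lies in $[-w,\infty)$, so the spectrum apart from $\{0\}$ is bounded away from $0$ and $0$ is isolated, yielding all four assertions (noting $0<-w$ guarantees that both $-\lambda$ and $0$ genuinely sit in the discrete part).

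I expect the main obstacle to be the rigorous invocation of the oscillation count: one must verify that the decay and regularity of $V$ are strong enough both to legitimize Weyl's theorem and to place $\phi'$ in $L^2$, and that $0$ lies strictly below the essential threshold (ensured by $0<-w$) so that the Sturm--Liouville indexing $\mu_0<\mu_1<\cdots$ applies without ambiguity. Alternatively, as indicated in the Remark following Theorem \ref{MainResult}, these spectral facts for the $Z=0$ operator can be quoted directly from Theorem B.61 in \cite{Angulopavabook}.
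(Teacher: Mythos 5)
Your proposal is correct and follows essentially the same route as the paper, whose proof simply cites classical Sturm--Liouville oscillation theory (via \cite{BerShu91}); you have merely filled in the standard details of that citation (Weyl's theorem for the essential spectrum, $-\mathcal{L}_0\phi'=0$ by differentiating \eqref{ordendoisz0}, and the nodal count placing $0$ as the second eigenvalue since $\phi'$ has exactly one zero). Your closing remark that one could instead quote Theorem B.61 of \cite{Angulopavabook} matches the alternative the paper itself points to in the Remark following Theorem \ref{MainResult}.
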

\begin{proof} This result is a consequence of the classical Sturm Liouville oscillation theory (see \cite{BerShu91}). 
\end{proof}
Now, we proceed to study the spectral properties of the operator ${\cal L}_{Z}$ for $Z\neq 0$.
\begin{Lemma}\label{Kert} For $Z$ satisfying the conditions in Theorem \textnormal{\ref{solution10}} and $Z\neq 0$, the kernel of ${\cal L}_{Z}$ is trivial.
\end{Lemma}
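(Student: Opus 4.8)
The plan is to identify $\ker\mathcal L_Z$ with the set of $H^1(\mathbb R)$ solutions of the linearized profile equation carrying the interface condition, and then to show by an explicit matching argument that this set is trivial once $Z\neq 0$. Concretely, $g\in\ker\mathcal L_Z$ means $g\in D_Z$ solves
\[
g''+\bigl(w+ap\phi_{w,Z}^{p-1}+b(2p-1)\phi_{w,Z}^{2p-2}\bigr)g=0,\qquad x\neq 0,
\]
together with the jump $g'(0+)-g'(0-)=-Zg(0)$ and decay at $\pm\infty$. The starting point is the explicit structure $\phi_{w,Z}(x)=\phi(|x|-s)$ with $\phi=\phi_{w,0}$ even and $s=R^{-1}\!\bigl(-Z/(2\sqrt{-w})\bigr)\neq 0$, the nonvanishing of $s$ being exactly where the hypothesis $Z\neq 0$ enters.

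The first step is to solve the equation separately on each half-line. Differentiating the profile identity \eqref{ordendoisz0} shows that $\phi'$ solves $L_0\psi:=\psi''+(w+ap\phi^{p-1}+b(2p-1)\phi^{2p-2})\psi=0$; on $(0,\infty)$ the substitution $y=x-s$ turns the half-line equation into $L_0g=0$. Because $w<0$ the equation is asymptotically $g''+wg=0$, so there is a one-dimensional space of solutions decaying at $+\infty$, and by Theorem \ref{espl0} it is spanned by $\phi'$. Hence $g(x)=c_+\phi'(x-s)$ on $(0,\infty)$ and, symmetrically, $g(x)=c_-\phi'(-x-s)$ on $(-\infty,0)$.

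The second step glues the two pieces. Continuity of $g\in H^1(\mathbb R)\subset C(\mathbb R)$ at the origin forces $c_+\phi'(-s)=c_-\phi'(-s)$; since $\phi$ is even with a strict maximum at $0$ and $-s\neq 0$, we have $\phi'(-s)\neq 0$, whence $c_+=c_-=:c$ and $g(x)=c\,\phi'(|x|-s)$ is even. Feeding this into the interface condition and using $g'(0+)=c\phi''(-s)$, $g'(0-)=-c\phi''(-s)$ yields the single scalar equation $c\,\bigl(2\phi''(-s)+Z\phi'(-s)\bigr)=0$.

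The crux, and the step I expect to be the real obstacle, is to verify that the bracket $2\phi''(-s)+Z\phi'(-s)$ cannot vanish, so that necessarily $c=0$ and $g\equiv 0$. Here I would substitute the profile jump relation $\phi'(-s)=-\tfrac{Z}{2}\phi(-s)$, replace $\phi''(-s)$ by means of the ODE, and simplify with the first integral \eqref{ord1z0}; the bracket collapses to $-2(p-1)\phi(-s)^p\bigl(\alpha+\beta\,\phi(-s)^{p-1}\bigr)$ with $\alpha=\tfrac{a}{p+1}$ and $\beta=\tfrac{b}{p}$. Finally, inserting the explicit value of $\phi(-s)^{p-1}$ from \eqref{numerator} rewrites $\alpha+\beta\,\phi(-s)^{p-1}$ as a manifestly positive quotient, its numerator and denominator being positive combinations of $\alpha>0$, $\sqrt{\alpha^2-\beta w}>0$ and $\cosh\ge 1$; this holds in both parameter regimes of Theorem \ref{solution10} precisely because $a>0$ forces $\alpha>0$. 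The resulting nonvanishing forces $c=0$, proving $\ker\mathcal L_Z=\{0\}$.
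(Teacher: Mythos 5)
Your proposal is correct and takes essentially the same route as the paper: reduction to the half-line problems whose decaying solution space is one-dimensional and spanned by the translate of $\phi'$, gluing by continuity and parity, and then deriving a contradiction from the jump condition at the origin. Your collapse of the bracket $2\phi''(-s)+Z\phi'(-s)$ to $-2(p-1)\phi(-s)^p\left(\alpha+\beta\,\phi(-s)^{p-1}\right)$ is exactly the paper's obstruction (\ref{identi}), namely $\phi_{w,Z}(0)^{p-1}=-\frac{ap}{b(p+1)}$, which the paper excludes by a two-case argument ($b>0$ by sign, $b<0$ via $\phi_{w,Z}^{p-1}(0)\leq\phi^{p-1}(0)$ and the explicit formula (\ref{numerator})); your single manifestly positive quotient, valid in both regimes because $\alpha>0$ and $\alpha^2-\beta w>0$ under the hypotheses of Theorem \ref{solution10}, merely streamlines that case analysis.
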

\begin{proof} Let $v\in Ker({\cal L}_{Z})$. It is clear that all elements in the kernel of the operator ${\cal L}_{Z}$ are solutions of 
\begin{equation}\label{valprop}
\left\{
\begin{aligned}
&{\cal L}_{Z}\phi(x)=0, \hspace{0.5cm} x>0.\\ 
&\phi\in L^2((0,\infty)).
\end{aligned}
\right.
\end{equation}
It is well known that the linear problem (\ref{valprop}) 
has dimension one. Since $\phi_{w,Z}'$ satisfies the problem (\ref{valprop}) then we conclude that there exists $\alpha\in\mathbb{R}$ such that $v(x)=\alpha\phi'_{w,Z}(x)$, for all $x>0$. A similar argument can be applied to the problem
\begin{equation*}\label{valpropiosi}
\left\{
\begin{aligned}
&{\cal L}_{Z}\phi(x)=0, \hspace{0.5cm}x<0.\\ 
&\phi\in L^2((-\infty,0)).
\end{aligned}
\right.
\end{equation*}
Thus, there exists a real number $\beta$ such that $v(x)=\beta\phi'_{w,Z}(x)$, for $x<0$. From the continuity of $v$ and the parity of the function $\phi_{w,Z}$, we deduce that $\alpha=-\beta$ so that $v$ is writen as
\begin{equation}\label{parada}
v(x)=
\begin{cases}
\alpha\phi'_{w,Z}(x), & \text{if}\hspace{0.3cm}  x\geq0,\\
-\alpha\phi'_{w,Z}(x) & \text{if}\hspace{0.3cm} x<0.
\end{cases}
\end{equation}
Since $v\in {\cal D}({\cal L}_{Z})$, it follows that $v'(0+)-v'(0-)=-Zv(0)$. From (\ref{parada})   
\begin{equation}\label{nose}
v'(0+)-v'(0-)=\alpha\phi_{w,Z}''(0+)+\alpha\phi_{w,Z}''(0-)=-Z\alpha\phi_{w,Z}'(0+).
\end{equation} Suppose that $\alpha\neq0$, from (\ref{eqdifx}) and (\ref{nose}), we obtain that $\phi_{w,Z}''(0+)=-Z/2\phi_{w,Z}'(0+)$. Multiplying the equation (\ref{eqdifx}) by $g'$ and integrating on the interval $(0,R)$, we get 
\begin{equation*}
-\frac{1}{2}(g'(R))^2+\frac{1}{2}(g'(0+))^2-F(g(R))+F(g(0+))=0
\end{equation*}where $F(s)=ws^2/2+as^{p+1}/(p+1)+bs^{2p}/(2p)$. If $R\rightarrow\infty$ equation (\ref{complimitado}) imply that
\begin{equation}\label{tomala}
\frac{1}{2}(g'(0+))^2+F(g(0+))=0.
\end{equation} Since $\phi_{w,Z}$ satisfies the equation (\ref{eqdifx}), from (\ref{tomala}), we get that $\frac{1}{2}(\phi_{w,Z}'(0+))^2=-F(\phi_{w,Z}(0))$. In addition, as $\phi_{w,Z}$ is an even function, we obtain that $\phi_{w,Z}'(0+)=\frac{-Z}{2}\phi_{w,Z}(0)$. Therefore, we deduce that $\phi_{w,Z}(0)$ is a positive zero of the following function 
\begin{equation}\label{poli}
P(s)=\frac{Z^2}{8}s^2+w\frac{s^2}{2}+a\frac{s^{p+1}}{p+1}+b\frac{s^{2p}}{2p}.
\end{equation} 
On the other hand, from the equation (\ref{ltaordenada}), we have that
\begin{equation*}
\lim_{x\rightarrow 0+} \phi_{w,Z}''(x)=\phi_{w,Z}''(0+)=-w\phi_{w,Z}(0)-a\phi^{p}_{w,Z}(0)-b\phi^{2p-1}_{w,Z}(0).
\end{equation*}
Since,
\begin{equation*}
\phi_{w,Z}''(0+)=\frac{Z^2}{4}\phi_{w,Z}(0),
\end{equation*}
it follows that $\phi_{w,Z}(0)$ is a positive zero of the function 
\begin{equation}\label{poliester}
R(s)=\frac{Z^2}{4}s+ws+as^p+bs^{2p-1}.
\end{equation} 
Since $s_0=\phi_{w,Z}(0)$ is a zero of both (\ref{poli}) and (\ref{poliester}), we deduce that
\begin{equation}\label{identi}
s_0^{p-1}=-\frac{ap}{b(p+1)}.
\end{equation}
Notice that $s_0=\phi_{w,Z}(0)$ is positive, then (\ref{identi}) gives us a contradiction if $a,b>0$. Now, in the case $a>0$, $b<0$, from (\ref{numerator}) and (\ref{numeratorwithZ}) the relation $\phi^{p-1}_{w,Z}(0)\leq \phi^{p-1}(0)$ is obtained. Since $-w<-\frac{pa^2}{(p+1)^2b}$ from (\ref{numerator}), we get
$$
\phi^{p-1}(0)=\frac{-w}{\alpha}\frac{1}{1+\sqrt{1-\beta w\alpha^{-2}}}<-\frac{ap}{b(p+1)}\frac{1}{1+\sqrt{1-\beta w\alpha^{-2}}}
$$
which is also a contradiction. Therefore, we conclude that $\alpha=0$ and then $v\equiv0$.  
\end{proof}

Now we show that the family of operators $-{\cal L}_{Z}$ depends analytically of the variable $Z$, where $Z$ satisfies the conditions of Theorem \ref{solution10}.
\begin{Lemma}\label{lem6.5} As a function of the variable $Z$, $\left\{-{\cal L}_{Z}\right\}_{Z\in\mathbb{R}}$ is a real analytic family of self-adjoint operators of type \textnormal{(B)} in the sense of Kato.
\end{Lemma}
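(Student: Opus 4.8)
The plan is to verify, at the level of the associated quadratic forms rather than the operators themselves, the defining conditions of a real-analytic self-adjoint family of type (B) in Kato's sense (see \cite[Ch.~VII, \S4]{Kato}). Recall that it suffices to produce a family of symmetric, densely defined, lower-semibounded, closed sesquilinear forms sharing a common form domain that is independent of $Z$, such that for each fixed vector in that domain the value of the form is a real-analytic function of $Z$; the family of self-adjoint operators these forms generate via the First Representation Theorem is then automatically real-analytic of type (B). The natural candidate is the family $t_Z:=S_{w,Z}''(\phi_{w,Z})$ from (\ref{q_form}), and I shall restrict $Z$ to the admissible interval $(-2\sqrt{-w},2\sqrt{-w})$ furnished by Theorem \ref{solution10}.

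First I would record the structural facts that are immediate from (\ref{q_form}): every form $t_Z$ has form domain $H^1(\mathbb{R})\times H^1(\mathbb{R})$, which does not depend on $Z$, and each $t_Z$ is manifestly symmetric. Next I would establish closedness and lower semiboundedness, locally uniformly in $Z$. Writing
$$
t_Z[u]=\|u'\|_{L^2}^2-\int_{\mathbb{R}}\big(w+V_Z\big)|u|^2\,dx-Z|u(0)|^2,\qquad V_Z:=ap\,\phi_{w,Z}^{p-1}+b(2p-1)\,\phi_{w,Z}^{2p-2},
$$
I note that $\phi_{w,Z}$ is a bounded continuous function with exponential decay, so $V_Z\in L^\infty(\mathbb{R})$ with $\|V_Z\|_{L^\infty}$ bounded on compact $Z$-sets, while the trace term is controlled by the one-dimensional inequality $|u(0)|^2\le\varepsilon\|u'\|_{L^2}^2+C_\varepsilon\|u\|_{L^2}^2$, valid for every $\varepsilon>0$. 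Hence the part of $t_Z$ beyond $\|u'\|_{L^2}^2$ is relatively form-bounded with respect to $\|u'\|_{L^2}^2$ with arbitrarily small relative bound, and the KLMN theorem shows each $t_Z$ is closed and bounded below, with a bound that can be taken uniform on compact subsets of the admissible interval.

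For the analyticity I would fix $u\in H^1(\mathbb{R})$ and examine $Z\mapsto t_Z[u]$ term by term: the contribution $\|u'\|_{L^2}^2-w\|u\|_{L^2}^2$ is $Z$-independent, and $-Z|u(0)|^2$ is linear in $Z$, hence real-analytic. For the remaining term $-\int_{\mathbb{R}}V_Z|u|^2\,dx$, I would invoke Theorem \ref{solution10}, by which $Z\mapsto\phi_{w,Z}$ is real-analytic; composing with the analytic substitutions $s\mapsto s^{p-1}$ and $s\mapsto s^{2p-2}$ shows $Z\mapsto V_Z\in L^\infty(\mathbb{R})$ is real-analytic, and pairing against the fixed $L^1$-function $|u|^2$ through the bounded linear functional $V\mapsto\int V|u|^2\,dx$ preserves analyticity. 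Thus $Z\mapsto t_Z[u]$ is real-analytic, completing the three required conditions and yielding the claim.

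The main obstacle is precisely the third step: making rigorous the real-analyticity of $Z\mapsto V_Z$ in a Banach space in which the pairing with $|u|^2$ is continuous, together with the requisite local uniformity. I expect the cleanest route to be a direct appeal to the closed-form expression (\ref{numeratorwithZ}): the argument $|x|+R^{-1}\!\big(Z/(2\sqrt{-w})\big)$ depends real-analytically on $Z$ because $R$ in (\ref{Erre}) is an analytic diffeomorphism, and the outer profile is analytic, bounded, and exponentially decaying with estimates uniform on compact $Z$-sets. This legitimizes differentiation under the integral sign and produces a locally convergent power-series expansion of $\int_{\mathbb{R}}V_Z|u|^2\,dx$, from which the real-analyticity follows.
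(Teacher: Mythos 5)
Your proposal takes essentially the same route as the paper: both reduce the claim, via Kato's Theorem VII-4.2, to checking that the forms $S_{w,Z}''(\phi_{w})$ in (\ref{q_form}) have the $Z$-independent domain $H^1(\mathbb{R})$, are closed and bounded from below, and are real-analytic in $Z$ for each fixed vector, with the last point resting on the analyticity of $Z\mapsto\phi_{w,Z}$ coming from the explicit formula (\ref{numeratorwithZ}) and the analytic diffeomorphism $R$ in (\ref{Erre}). Your KLMN/trace-inequality and differentiation-under-the-integral details simply make rigorous the steps the paper asserts without proof, so the two arguments coincide in substance.
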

\begin{proof}
By the Theorem VII-4.2 in \cite{Kato} and \cite{Reed}, it is enough to show that the family of bilinear forms  
$S_{w, Z}''(\phi_{w})$ defined in (\ref{q_form}) are real analytic of type (B). Namely
\begin{enumerate}
\item The domain $D\left(S_{w, Z}''(\phi_{w})\right)$ of the forms $S_{w, Z}''(\phi_{w})$ is independent of the parameter $Z$. In our case, $D\left(S_{w, Z}''(\phi_{w})\right)=H^1(\mathbb{R})$ for all $Z\in\mathbb{R}$.
\item For each $Z\in\mathbb{R}$, $S_{w, Z}''(\phi_{w})$ is closed and bounded from below. 
\item Since $\phi$ and $R$ given in (\ref{numerator}) and (\ref{Erre}) respectively are analytic functions then $\phi_{w,Z}$ is an analytic function. Thus, for each $v\in{\cal D}$ the function $Z\rightarrow S_{w, Z}''(\phi_{w})(v,v)$ is analytic.   
\end{enumerate}
\end{proof}

Next, we use the Kato-Rellich Theorem to prove some specific properties of the second eigenvalue and eigenfunction of the operator $-{\cal L}_{Z}$.

\begin{Lemma}\label{piomega}
There exist $Z_0>0$ and analytic functions $\Pi_2:(-Z_0,Z_0)\rightarrow \mathbb{R}$, $\Omega_2:(-Z_0,Z_0)\rightarrow L^2(\mathbb{R})$, such that 
\begin{enumerate}[$(i)$]
\item For each $Z\in(-Z_0,Z_0)$, $\Pi_2(Z)$ is the second eigenvalue of $-{\cal L}_{Z}$, which is simple and $\Omega_2(Z)$ its corresponding eigenfunction.
\item $\Pi_2(0)=0$ and $\Omega_2(0)=\phi'$, where $\phi$ is given in \textnormal{(\ref{numerator})}.
\item $Z_0$ can be chosen small enough such that the spectrum of $-{\cal L}_{Z}$ with $Z\in(-Z_0,Z_0)$ is greater than $0$ except by the first 
$2$ eigenvalues.
\end{enumerate}
\end{Lemma}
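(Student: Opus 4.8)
The plan is to realize $\Pi_2$ and $\Omega_2$ as the analytic branch into which the simple eigenvalue $0$ of $-\mathcal{L}_0$ splits under the perturbation, by means of the Kato-Rellich analytic perturbation theory. The two ingredients are already in hand: by Theorem \ref{espl0} the value $0$ is an isolated simple eigenvalue of $-\mathcal{L}_0$ with eigenfunction $\phi'$, separated below from $-\lambda$ and above from the rest of the spectrum (which lies above a positive number, the essential spectrum being $[-w,\infty)$ with $-w>0$); and by Lemma \ref{lem6.5} the family $\{-\mathcal{L}_Z\}_{Z\in\mathbb{R}}$ is a real-analytic family of self-adjoint operators of type (B) in the sense of Kato.

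First I would invoke Theorem VII-4.2 of \cite{Kato} at the isolated simple eigenvalue $0$ of $-\mathcal{L}_0$. Since this eigenvalue is simple and isolated and the family is holomorphic of type (B), the theorem supplies $Z_0>0$ and real-analytic maps $\Pi_2:(-Z_0,Z_0)\to\mathbb{R}$ and $\Omega_2:(-Z_0,Z_0)\to L^2(\mathbb{R})$ such that, for each $Z$, $\Pi_2(Z)$ is a simple eigenvalue of $-\mathcal{L}_Z$ with eigenfunction $\Omega_2(Z)$, together with the normalizations $\Pi_2(0)=0$ and $\Omega_2(0)=\phi'$. This already yields the analyticity and the simplicity asserted in $(i)$, and all of $(ii)$.

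It remains to identify $\Pi_2(Z)$ as the \emph{second} eigenvalue and to establish $(iii)$, which I would carry out by a uniform gap argument. The potentials $\phi_{w,Z}^{p-1}$ and $\phi_{w,Z}^{2p-2}$ decay at infinity and the $\delta$ interaction is supported at a single point, so $-\mathcal{L}_Z$ differs from $-\mathcal{L}_0$ by a relatively compact perturbation; Weyl's theorem then keeps the essential spectrum equal to $[-w,\infty)$, hence at distance $-w>0$ from a neighborhood of $0$. Applying the same type (B) theory to the ground eigenvalue $-\lambda$ produces a further analytic branch $\Pi_1(Z)$ with $\Pi_1(0)=-\lambda$, so that $\Pi_1(Z)<0$ after shrinking $Z_0$. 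Finally, since $0$ sits in a spectral gap of width at least some $\eta>0$ and the spectrum is upper semicontinuous for this holomorphic family, I would shrink $Z_0$ once more so that, for every $Z\in(-Z_0,Z_0)$, the part of $\sigma(-\mathcal{L}_Z)$ lying in $(-\infty,\eta/2)$ is exactly $\{\Pi_1(Z),\Pi_2(Z)\}$ with $\Pi_1(Z)<\Pi_2(Z)$, while all the remaining spectrum stays above $\eta/2>0$. This makes $\Pi_2(Z)$ the second eigenvalue and gives $(iii)$.

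The delicate step is this last one: controlling the entire spectrum uniformly in $Z$ so that no point of the essential spectrum slides into the gap around $0$ and the order of the first two eigenvalues is preserved. This rests on the invariance of the essential spectrum together with a uniform spectral-gap estimate, both of which are available, for $Z_0$ small, from the upper semicontinuity of the spectrum of the holomorphic type (B) family furnished by Lemma \ref{lem6.5}.
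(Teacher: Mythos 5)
Your proposal is correct and takes essentially the same route as the paper: both arguments rest on Lemma \ref{lem6.5} (the real-analytic type (B) family), on separating the finite part $\sigma_0=\left\{-\lambda,0\right\}$ of $\sigma(-\mathcal{L}_0)$ from $[-w,\infty)$ and invoking stability of this separation under generalized convergence (Kato, Theorem IV, \S 3.16), and on Kato--Rellich to produce the analytic branches through $-\lambda$ and $0$, which together yield $(i)$--$(iii)$. Your additional appeal to Weyl's theorem for invariance of the essential spectrum is a harmless variant of the paper's closed-curve separation step and does not change the substance of the argument.
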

\begin{proof} The proof proceeds in several steps:
\begin{enumerate}[(1)]
\item There is a positive $M$ such that $\sigma\left(-{\cal L}_{Z}\right)\cap(-\infty,-M)=\emptyset$ for $Z\in [-a,a]$, and $a$ small enough.
\item From the Theorem \ref{espl0}, defining $\lambda_{1,0}=-\lambda$ and $\lambda_{2,0}=0$, we can separete the spectrum $\sigma\left(-{\cal L}_{0}\right)$ of $-{\cal L}_{0}$ into two parts $\sigma_0=\{-\lambda,0\}$  
and $\sigma_1=[-w,\infty)$ by a simple closed curve $\Gamma\subset\rho\left(-{\cal L}_0\right)$ such that $\sigma_0\subset \text{Int}(\Gamma)$ and $\sigma_1$ in its exterior; where, $\text{Int}(\Gamma)$ denotes the interior of $\Gamma$.
\item From Lemma \ref{lem6.5}, we have that $-{\cal L}_{Z}$ converges to $-{\cal L}_{0}$ as $Z\to 0$ in the generalized sense. So, from Theorem IV, \S 3.16 in \cite{Kato}, we have that $\Gamma\subset\rho\left(-{\cal L}_Z\right)$ for $Z\in(-\epsilon_1,\epsilon_1)$, $\epsilon_1$ small enough, and $\sigma\left(-{\cal L}_{Z}\right)$ is also separated by $\Gamma$ into two parts such that the part of $\sigma\left(-{\cal L}_{Z}\right)$ inside $\Gamma$ consists of a finite set of eigenvalues with total algebraic multiplicity $2$.
\item For $i=1,2$, and $\gamma$ small enough we define circles $\Gamma_i=\{z\in\mathbb{C}:|z-\lambda_{i,0}|=\gamma\}$ such that $\Gamma_1\cap\Gamma_2=\emptyset$ and $\Gamma_i$ is in the interior of $\Gamma$. Thus from the nondegeneracy of the eigenvalues $-\lambda_{i,0}$, we obtain that there exists $0<Z_1<\epsilon_1$ such that for $Z\in(-Z_1,Z_1)$, $\sigma\left(-{\cal L}_Z\right)\cap \text{Int}(\Gamma_i)=\left\{\lambda_{i,Z}\right\}$, where $\lambda_{i,Z}$ are simple eigenvalues for $-{\cal L}_Z$, furthermore, $\lambda_{i,Z}\to \lambda_{i,0}$, as $Z\to 0$.
\item Applying the Kato-Rellich's Theorem (Theorem XII.8 in \cite{Reed}) for each one of the simple eigenvalues $\lambda_{i,0}$, $i=1,2,$ we obtain the existence of a positive $Z_0<Z_1$, and analytic functions $\Pi_2, \Omega_2$ defined on the intervals $(-Z_0,Z_0)$ satisfying the items $(i),(ii)$ and $(iii)$ of the theorem.
\end{enumerate}
\end{proof}

Now, we proceed to count the number of negative eigenvalues of the operator $-{\cal L}_Z$. First, we assume that $Z$ is small.
\begin{Lemma}\label{lem6.8} 
There exists $r\in\mathcal{R}$, $0<r<Z_0$ such that $\Pi_2(Z)<0$, for any $Z\in(-r,0)$ and $\Pi_2(Z)>0$ for any $Z\in(0,r)$. Therefore, $-{\cal L}_{Z}$ has exactly two negative eigenvalues para $Z$ negative and small and $-{\cal L}_{Z}$ has exactly one negative eigenvalue for $Z$ positive and small.    
\end{Lemma}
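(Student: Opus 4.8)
The plan is to reduce the whole statement to a single sign computation, namely to show that $\Pi_2'(0)>0$. Lemma \ref{piomega} already gives that $\Pi_2$ is analytic on $(-Z_0,Z_0)$ with $\Pi_2(0)=0$, so once $\Pi_2'(0)>0$ is known, analyticity yields an $r\in(0,Z_0)$ with $\Pi_2(Z)<0$ on $(-r,0)$ and $\Pi_2(Z)>0$ on $(0,r)$. The eigenvalue count then follows at once: by Theorem \ref{espl0} the lowest eigenvalue of $-\mathcal L_0$ is $-\lambda<0$, so its analytic continuation $\Pi_1(Z)$ stays strictly negative for $Z$ small (shrinking $r$ if necessary), while by item $(iii)$ of Lemma \ref{piomega} the remainder of $\sigma(-\mathcal L_Z)$ is positive. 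Hence for $Z\in(-r,0)$ both $\Pi_1(Z),\Pi_2(Z)<0$ (two negative eigenvalues) and for $Z\in(0,r)$ we have $\Pi_1(Z)<0<\Pi_2(Z)$ (one negative eigenvalue).

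To obtain $\Pi_2'(0)$ I would use first-order (Feynman--Hellmann) perturbation at the level of the quadratic form. Writing $a_Z:=S_{w,Z}''(\phi_{w,Z})$ as in (\ref{q_form}), the pair $(\Pi_2(Z),\Omega_2(Z))$ satisfies $a_Z(\Omega_2(Z),v)=\Pi_2(Z)\langle\Omega_2(Z),v\rangle$ for all $v\in H^1(\mathbb R)$; differentiating in $Z$, testing against $v=\Omega_2(Z)$ and using the symmetry of $a_Z$ together with the eigenvalue relation, the variation of the eigenfunction cancels and one is left with $\Pi_2'(0)=\dot a_0(\phi',\phi')/\|\phi'\|_{L^2}^2$, where $\dot a_0$ is the derivative of the form coefficients at $Z=0$ and $\Omega_2(0)=\phi'$. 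Now $a_Z$ depends on $Z$ in two ways: explicitly through the boundary term $-Zu(0)v(0)$, and implicitly through the profile $\phi_{w,Z}$ appearing in the potential. The explicit contribution is $-(\phi'(0))^2$, which vanishes because $\phi=\phi_{w,0}$ is even and hence $\phi'(0)=0$.

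It remains to evaluate the implicit contribution, which is the crux. From (\ref{numerator}), (\ref{Erre}) and (\ref{numeratorwithZ}) the profile is a pure translation of the $Z=0$ profile, $\phi_{w,Z}(x)=\phi(|x|+\tau(Z))$ with $\tau(Z)=R^{-1}\!\big(Z/(2\sqrt{-w})\big)$, so $\tau(0)=0$ and $\tau'(0)=\big(2\sqrt{-w}\,R'(0)\big)^{-1}>0$ (here $R'(0)>0$ since $\alpha=a/(p+1)>0$ and $\alpha^2-\beta w>0$). Hence $\partial_Z\big(ap\phi_{w,Z}^{p-1}+b(2p-1)\phi_{w,Z}^{2p-2}\big)\big|_{Z=0}=\tau'(0)\,U'(|x|)$, where $U$ is the potential of $-\mathcal L_0$. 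The key simplification is to eliminate $U'$ through the relation $-\mathcal L_0\phi'=0$ of Theorem \ref{espl0}: writing $U=-w-\phi'''/\phi'$ on $(0,\infty)$ gives the identity $U'(\phi')^2=\frac{d}{dx}\!\big[(\phi'')^2-\phi'''\phi'\big]$, so the integral telescopes and, using $\phi'(0)=0$ and the decay (\ref{complimitado}), collapses to a single boundary term. This produces $\Pi_2'(0)=2\tau'(0)(\phi''(0))^2/\|\phi'\|_{L^2}^2>0$, with $\phi''(0)\neq0$ because $x=0$ is a nondegenerate maximum of $\phi$ (immediate from (\ref{numerator})).

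The main obstacle is precisely this implicit dependence: at first glance the derivative of the potential seems to force a separate treatment of $b>0$ and $b<0$, but the reduction through the profile equation $-\mathcal L_0\phi'=0$ packages the entire integral into the manifestly positive quantity $(\phi''(0))^2$, handling both sign regimes simultaneously. I would also be careful to justify the form-level Feynman--Hellmann formula by invoking that $\{-\mathcal L_Z\}$ is an analytic family of type (B) (Lemma \ref{lem6.5}), so that $Z\mapsto\Omega_2(Z)$ is $H^1$-valued analytic and the above differentiation is legitimate. Finally, the sign $\Pi_2'(0)>0$ is consistent with Theorem \ref{MainResult}, where the unstable manifold has the larger dimension precisely for $Z<0$.
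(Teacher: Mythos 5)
Your proposal is correct, and while it follows the paper's overall strategy (reduce the lemma to the single sign condition $\Pi_2'(0)>0$ and then conclude by analyticity plus the spectral separation of Lemma \ref{piomega}), the computation of $\Pi_2'(0)$ itself takes a genuinely different route. The paper evaluates $\langle -\mathcal{L}_Z\Omega_2(Z),\phi'_{w,0}\rangle$ in two ways: once via the eigenvalue relation, and once via self-adjointness together with the abstract expansion $\phi_{w,Z}=\phi_{w,0}+Z\chi_0+O(Z^2)$; it never computes $\chi_0=\partial_Z\phi_{w,Z}|_{Z=0}$ explicitly, but instead uses two structural identities, $\langle -\mathcal{L}_0\chi_0,\psi\rangle=\phi_{w,0}(0)\psi(0)$ (equation (\ref{delemaduro}), obtained by differentiating the stationary equation containing the delta term) and $-\mathcal{L}_0(-w\phi_{w,0}-f(\phi_{w,0}))=f''(\phi_{w,0})[\phi'_{w,0}]^2$ (equation (\ref{tal})), to move $-\mathcal{L}_0$ off $\chi_0$, arriving at $\beta=\phi_{w,0}(0)\left(-\phi''_{w,0}(0)\right)/\|\phi'_{w,0}\|^2>0$. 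You instead work at the form level (Feynman--Hellmann, legitimized by the type (B) analyticity of Lemma \ref{lem6.5}) and exploit the explicit translation structure $\phi_{w,Z}(x)=\phi(|x|+\tau(Z))$, so the derivative of the potential is $\tau'(0)W'(|x|)$ with $W=ap\phi^{p-1}+b(2p-1)\phi^{2p-2}$, and then collapse the integral through the Wronskian-type identity $W'(\phi')^2=\frac{d}{dx}\left[(\phi'')^2-\phi'''\phi'\right]$, yielding $\Pi_2'(0)=2\tau'(0)(\phi''(0))^2/\|\phi'\|^2$. The two answers agree: a direct check from (\ref{numerator}) shows $-\phi(0)\phi''(0)=2\tau'(0)(\phi''(0))^2$, and both expressions are manifestly positive with no case-splitting on the sign of $b$. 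Your route buys explicitness and avoids identities (\ref{delemaduro}) and (\ref{tal}) entirely, at the cost of depending on the closed-form profile; the paper's route would survive in settings where $\phi_{w,Z}$ is known only as an abstract analytic branch. Two minor remarks: your $U=-w-\phi'''/\phi'$ is the coefficient $W$ above rather than the full Schr\"odinger potential of $-\mathcal{L}_0$ (harmless, since the additive constant $-w$ disappears upon differentiation); and your explicit handling of the concluding count (persistence of $\Pi_1(Z)<0$ together with item $(iii)$ of Lemma \ref{piomega}) spells out a step the paper's ``Therefore'' leaves implicit, so your write-up is, if anything, more complete on that point.
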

\begin{proof}
By Taylor's Theorem around $Z=0$, the functions $\Pi_2$ and $\Omega_2$ in lemma \ref{piomega} can be written as
\begin{equation}\label{taylorpiomega}
\begin{aligned}
\Pi_2(Z)&=\beta Z+O\left(Z^2\right),\\
\Omega_2(Z)&=\phi'_{w,0}+Z\psi_0+O\left(Z^2\right),
\end{aligned}
\end{equation}
where $\phi'_{w,0}=\frac{d}{dx}\phi_{w,0}$, $\beta\in \mathbb{R}$, ($\beta=\Pi_2'(0)$) and $\psi_0\in L^2(\mathbb{R})$ $(\psi_0=\Omega_2'(0))$. To show the result, it is enough to show that $\beta>0$ or equivalently that $\Pi_2(Z)$ is an increasing function of the variable $Z$ around $Z=0$. Since the function $Z\rightarrow\phi_{w,Z}$ is analytic, then for $Z$ close to zero, we have that   
\begin{equation*}\label{taylorsol}
\phi_{w,Z}=\phi_{w,0}+Z\chi_0+O\left(Z^2\right),
\end{equation*}
where 
\begin{equation}\label{explicitchi}
\chi_0=\left.\frac{d}{dZ}\phi_{w,Z}\right|_{Z=0}.
\end{equation}
Now, from the equation (\ref{ltaordenada}), we have that for all $\psi\in H^1(\mathbb{R})$,
\begin{equation}\label{delemas}
\left\langle -\phi''_{w,Z}-w\phi_{w,Z}-a\phi^{p}_{w,Z}-b\phi^{2p-1}_{w,Z},\psi \right\rangle=Z\phi_{w,Z}(0)\psi(0).
\end{equation}Taking derivative with respect to the variable $Z$ in (\ref{delemas}) and evaluating in $Z=0$, we get that
\begin{equation}\label{delemaduro}
\left\langle -{\cal L}_{0}\chi_0,\psi\right\rangle=\phi_{w,0}(0)\psi(0).
\end{equation}In order to obtain $\beta$ as a function of the variable $Z$. We compute the quantity $\left\langle -{\cal L}_{Z}\Omega_2(Z),\phi'_{w,0}\right\rangle$ in two different ways.   
\begin{enumerate}[(1)]
\item Since $-{\cal L}_{Z}\Omega_2(Z)=\Pi_2(Z)\Omega_2(Z)$, then from (\ref{taylorpiomega})
\begin{equation}\label{primform}
\left\langle -{\cal L}_{Z}\Omega_2(Z),\phi'_{w,0}\right\rangle=\beta Z||\phi'_{w,0}||^2+O\left(Z^2\right).
\end{equation}
\item Since $-{\cal L}_{Z}$ is selfadjoint, then $\left\langle -{\cal L}_{Z}\Omega_2(Z),\phi'_{w,0}\right\rangle=\left\langle \Omega_2(Z),-{\cal L}_{Z}\phi'_{w,0}\right\rangle$. Now, since $-{\cal L}_{0}\phi'_{w,0}=0$, it follows from (\ref{explicitchi}) that  
\begin{equation}\label{ele}
\begin{aligned}
-{\cal L}_{Z}\phi'_{w,0}&=-{\cal L}_{0}\phi'_{w,0}+\left[f'\left(\phi_{w,0}\right)-f'\left(\phi_{w,Z}\right)\right]\phi'_{w,0}\\
&=\left[f'\left(\phi_{w,0}\right)-f'\left(\phi_{w,0}+Z\chi_0+O(Z^2)\right)\right]\phi'_{w,0}\\
&=-f''\left(\phi_{w,0}\right)Z\chi_0\phi'_{w,0}+O\left(Z^2\right),
\end{aligned}
\end{equation}where $f(x)=ax^p+bx^{2p-1}$. Thus, from (\ref{taylorpiomega}) and (\ref{ele}), we obtain that
\begin{equation}\label{masmas}
\begin{aligned}
\left\langle -{\cal L}_{Z}\Omega_2(Z),\phi'_{w,0}\right\rangle &=-Z\left\langle\phi'_{w,0},f''\left(\phi_{w,0}\right)\chi_0\phi'_{w,0}\right\rangle+O(Z^2).
\end{aligned}
\end{equation}
On the other hand, by direct computation, we see that 
\begin{equation}\label{tal}
-{\cal L}_{0}(-w\phi_{w,0}-f\left(\phi_{w,0}\right))=f''(\phi_{w,0})[\phi'_{w,0}]^2.
\end{equation}
Since $\phi$ satisfies the equation (\ref{ordendoisz0}), it follows from (\ref{delemaduro}), (\ref{masmas}), (\ref{tal})

\begin{equation}\label{fin}
\begin{aligned}
\left\langle -{\cal L}_{Z}\Omega_2(Z),\phi'_{w,0}\right\rangle&=-Z\left\langle\chi_0,f''(\phi_{w,0})[\phi'_{w,0}]^2\right\rangle+O\left(Z^2\right)\\
&=-Z\left\langle-{\cal L}_{0}\chi_0,-w\phi_{w,0}-f\left(\phi_{w,0}\right)\right\rangle+O\left(Z^2\right)\\
&=-Z\phi_{w,0}(0)(-w\phi_{w,0}(0)-f(\phi_{w,0}(0)))+O\left(Z^2\right)\\
&=-Z\phi_{w,0}(0)(\phi''_{w,0}(0))+O\left(Z^2\right).
\end{aligned}
\end{equation} 
\end{enumerate}
Finally, from (\ref{primform}) and (\ref{fin}), we conclude that
\begin{equation*}
\beta=\frac{\phi_{w,0}(0)(-\phi''_{w,0}(0))}{||\phi'_{w,0}||^2}+O(Z).
\end{equation*}
Hence $\Pi_2'[0]=\beta>0$. This completes the proof of the lemma. 
\end{proof}
Next, we extend the results in the Lemma \ref{lem6.8} for all the values of $Z$.   

\begin{Lemma}\label{Numberofnegeig}Let $Z\in\mathbb{R}$ and set $n\left(-{\cal L}_Z\right)$ the number of negative eigenvalues of $-{\cal L}_{Z}$, then we have that
\begin{enumerate}
\item For $Z<0$, $n\left(-{\cal L}_Z\right)=2$.
\item  For $Z\geq0$, $n\left(-{\cal L}_Z\right)=1$.
\end{enumerate}
\end{Lemma}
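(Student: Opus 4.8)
The plan is to promote the local (small-$Z$) eigenvalue count obtained in Lemma \ref{lem6.8} to the whole admissible range of $Z$ by a continuation argument, using the analytic dependence of the family on $Z$ together with the fact that zero is never an eigenvalue for $Z\neq 0$. Throughout, $Z$ ranges in the admissible interval $(-2\sqrt{-w},2\sqrt{-w})$ on which the profile $\phi_{w,Z}$ is defined (Theorem \ref{solution10}), and I write $n(-{\cal L}_Z)$ for the number of negative eigenvalues counted with multiplicity.

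First I would record the structural features that make the count stable. Since $\phi_{w,Z}$ is bounded uniformly for $Z$ in compact subsets of the admissible interval, the multiplication terms $ap\phi_{w,Z}^{p-1}+b(2p-1)\phi_{w,Z}^{2p-2}$ are uniformly bounded, so the operators $-{\cal L}_Z$ are uniformly bounded below; in particular all negative eigenvalues lie in a fixed compact interval $[-M,0)$. Moreover the delta potential together with the bounded multiplication terms is a relatively compact perturbation of $-d^2/dx^2-w$, so the essential spectrum stays fixed at $[-w,\infty)$ for every admissible $Z$, and since $w<0$ this threshold is strictly positive. Consequently every negative eigenvalue is isolated, of finite multiplicity, and cannot be absorbed into the essential spectrum.

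Next I would run the continuation. By Lemma \ref{lem6.5} the family $\{-{\cal L}_Z\}$ is real-analytic of type (B), hence continuous in the norm-resolvent sense. By Lemma \ref{Kert}, for every $Z\neq 0$ the kernel of ${\cal L}_Z$ is trivial, so $0\notin\sigma(-{\cal L}_Z)$; combined with the fact that $0$ lies strictly below the essential spectrum, the point $0$ belongs to the resolvent set for all $Z\neq 0$. Therefore the Riesz spectral projection $P_Z$ onto the part of $\sigma(-{\cal L}_Z)$ contained in $(-\infty,0)$, obtained by integrating the resolvent over a fixed contour encircling $[-M,0)$ and crossing the real axis in the gap around $0$, is well defined and depends continuously on $Z$ on each of the two connected components $(-2\sqrt{-w},0)$ and $(0,2\sqrt{-w})$. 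A continuous family of projections has locally constant rank, so $n(-{\cal L}_Z)=\operatorname{rank}P_Z$ is constant on each component.

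Finally I would pin the two constants. On $(0,2\sqrt{-w})$, Lemma \ref{lem6.8} gives $n(-{\cal L}_Z)=1$ for small $Z>0$, hence $n(-{\cal L}_Z)=1$ throughout; on $(-2\sqrt{-w},0)$, Lemma \ref{lem6.8} gives $n(-{\cal L}_Z)=2$ for small $Z<0$, hence $n(-{\cal L}_Z)=2$ throughout. At $Z=0$, Theorem \ref{espl0} gives exactly one negative eigenvalue, so $n(-{\cal L}_0)=1$. Collecting these yields $n(-{\cal L}_Z)=2$ for $Z<0$ and $n(-{\cal L}_Z)=1$ for $Z\geq 0$, as claimed. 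The hard part will be the justification that the negative-eigenvalue count can change only through an eigenvalue crossing $0$: this is precisely what the uniform lower bound and the fixed essential spectrum guarantee, since eigenvalues can neither escape to $-\infty$ nor disappear into $[-w,\infty)$, while Lemma \ref{Kert} forbids any crossing of $0$ away from the origin.
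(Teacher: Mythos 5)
Your proposal is correct and takes essentially the same route as the paper: the paper's proof is a one-line appeal to Lemma \ref{Kert} together with the ideas of Le Coz \emph{et al.} \cite{LeCoz}, and those ideas are precisely the continuation argument you spell out (the negative-eigenvalue count is locally constant on each component of $Z\neq 0$ since the kernel is trivial, eigenvalues are confined below the fixed essential spectrum $[-w,\infty)$, and the count is pinned by Lemma \ref{lem6.8} near $Z=0$ and by Theorem \ref{espl0} at $Z=0$). In effect you have supplied the details the paper delegates to the citation.
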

\begin{proof}
The proof is based in Lemma \ref{Kert} above and the ideas in Le Coz {\it et al.}  \cite{LeCoz}.
\end{proof}

Finally, the proof of the Theorem \ref{MainResult} follows from the Theorem 5.1.3 and 5.2.1 in \cite{DAN HENRY}.

\renewcommand{\refname}{References}

\end{document}